\theoremstyle{plain}
\newtheorem{theorem}{Theorem}
\newtheorem{proposition}{Proposition}[section]
\newtheorem{lemma}[proposition]{Lemma}
\newtheorem{corollary}[proposition]{Corollary}
\theoremstyle{definition}
\newtheorem{question}{Question}
\theoremstyle{remark}
\newtheorem{remark}[proposition]{Remark}
\newcommand{\Z}{\mathbb{Z}}
\newcommand{\R}{\mathbb{R}}
\newcommand{\RP}{\mathbb{RP}}
\newcommand{\CP}{\mathbb{CP}}
\newcommand{\SL}{\operatorname{SL}}
\newcommand{\PSL}{\operatorname{PSL}}
\newcommand{\SLtilde}{\widetilde{\SL}}
\newcommand{\presentation}[2]{\left\langle\,#1\,\big|\,#2\,\right\rangle}
\newcommand{\fa}{\mathfrak{a}}
\newcommand{\fb}{\mathfrak{b}}
\newcommand{\fc}{\mathfrak{c}}
\newcommand{\fg}{\mathfrak{g}}
\newcommand{\fz}{\mathfrak{z}}
\newcommand{\fm}{\mathfrak{m}}
\newcommand{\fr}{\mathfrak{r}}
\newcommand{\fs}{\mathfrak{s}}
\newcommand{\ft}{\mathfrak{t}}
\newcommand{\cT}{\mathcal{T}}
\title{More Exotic $\RP^2$-knots and Homotopy Spheres}
\author{Judson Kuhrman}
\date{April 2025}
\begin{document}

\begin{abstract}
    We extend the infinite family of exotic embeddings $\mathbb{RP}^2 \hookrightarrow S^4$ constructed by Miyazawa to a strictly larger family of exotic embeddings, by showing that in place of the pretzel knot $P(-2, 3, 7)$, an infinite family of knots may be used as input to the construction. To this end, we prove that for any Montesinos knot of the form $K(2,3,|6s+1|)$, the branched double cover of the corresponding roll-spun knot is a homotopy sphere. This in turn produces a larger family of homotopy spheres and homotopy $\CP^2$s with potentially interesting involutions. We also observe that Miyazawa's homotopy sphere can be obtained from $S^4$ by a Gluck twist.
\end{abstract}
\maketitle
\vspace{-\baselineskip}

\section{Introduction}

The study of exotic knotted surfaces in $S^4$ begins with the construction by Finashin-Kreck-Viro \cite{finashin-kreck-viro} of infinitely many embeddings $( \mathbb{RP}^2 )^{\# 10} \hookrightarrow S^4$ all of which are topologically isotopic but smoothly mutually non-isotopic. These surfaces are smoothly distinguished by the diffeomorphism type of their respective branched double covers. Since then, further examples of exotic non-orientable knotted surfaces in $S^4$ with non-diffeomorphic branched double covers have been found, for example, by Finashin \cite{finashin}, Havens \cite{havens}, Levine-Lidman-Piccirillo \cite{levine-lidman-piccirillo}, and Mati\'c et. al. \cite{matic-et-al}. To date, there is no known exotic orientable knotted surface in $S^4$, despite strong theorems detecting topologically isotopic knotted surfaces. Notably, Conway-Powell \cite{conway-powell} prove that any 2-knot with knot group $\mathbb Z$ is topologically unknotted, but no exotic example has yet been found. 

An alternate approach to smoothly distinguishing knotted surfaces, introduced by Miyazawa \cite{miyazawa}, is to consider not just the total space of the respective double branched covers but the additional data of the branching involution. This approach, using invariants of smooth involutions on 4-manifolds, has garnered recent interest due to striking results. Notably, Miyazawa proves the existence of an infinite family of $\mathbb{RP}^2$-knots in $S^4$ that are topologically unknotted but smoothly mutually non-isotopic. In contrast, among the aforementioned examples, \cite{finashin} long held the record for smallest-known non-orientable genus at 6, with \cite{matic-et-al} finding examples of non-orientable genus 5.\footnote{These and other higher-genus examples should still be considered interesting in their own right, as they are all \emph{irreducible}: they cannot be formed by connect-sum with an unknotted surface.} Further, Miyazawa's construction yields interesting homotopy $S^4$s and homotopy $\mathbb{CP}^2$s with involutions that are not conjugate by a diffeomorphism to standard involutions. In the case of the homotopy $\mathbb{CP}^2$s, Hughes, Kim, and Miller \cite{hughes-kim-miller} show that the underlying smooth manifolds are all standard, and conclude the existence of smooth involutions on $\mathbb{CP}^2$ which are topologically but not smoothly conjugate to complex conjugation.

Let $P_\pm \subseteq S^4$ be an unknotted $\mathbb{RP}^2$-knot with normal euler number $\pm 2$. By an exotically unknotted $\mathbb{RP}^2$, we mean a smooth embedding $\mathbb{RP}^2\hookrightarrow S^4$ which is topologically but not smoothly isotopic to one of $P_\pm$. Miyazawa's examples are constructed by repeatedly connect-summing $P_\pm$ with copies of the roll-spin of the pretzel knot $P(-2,3,7)$. We will show that the same construction produces more exotically unknotted $\mathbb{RP}^2$s by replacing $P(-2,3,7)$ with the Montesinos knot $K(2,3,|6s+1|)$ for any $s\in \Z\setminus\{0,-1\}$. (Here, we mean the Montesinos knot whose branched double cover is tautologically $\Sigma(2,3,|6s+1|)$, cf. \cite{saveliev}.)

The invariant Miyazawa uses to distinguish surfaces comes from real Seiberg-Witten gauge theory. We will refer to this invariant of knotted spheres and $\RP^2$s as the real Seiberg-Witten invariant. The main input to Miyazawa's construction of exotic $\RP^2$-knots is a 2-knot $S$ such that $S\# P_\pm$ is topologically unknotted and such that the value of the real Seiberg-Witten invariant $|\deg(S)|$ is nontrivial. Miyazawa uses the fact that $S\#P_\pm$ is topologically unknotted to deduce that the branched double cover $\Sigma_2(S^4,S)$ is a homotopy sphere. We observe that this implication is bidirectional, simplifying the task of finding candidates for $S$. In \cite{kang-park-taniguchi}, the value of the real Seiberg-Witten invariant was computed for Montesinos knots of the form $K(2,3,|6s+1|)$ and shown to take all odd integer values. We will use the results of this computation to produce a strictly larger class of exotically unknotted $\RP^2$s. We obtain the following results.
\begin{theorem}\label{thm:spheres}
    For all $s\in\Z$, the branched double cover of the roll-spun knot $\rho K(2,3,|6s+1|)$ is a homotopy 4-sphere.
\end{theorem}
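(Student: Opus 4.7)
The plan is to give a concrete model of $\Sigma_2(S^4, \rho K)$ and verify directly that it is simply connected with the homology of $S^4$. Write $\Sigma = \Sigma(2,3,|6s+1|)$ for the branched double cover of $S^3$ over $K = K(2,3,|6s+1|)$, an integral homology 3-sphere. Decomposing $S^4 = (D^3 \times S^1) \cup (S^2 \times D^2)$ so that $\rho K$ meets the first piece in the cylinder traced by the rotating arc representing $K$ and the second in a pair of parallel disks, the branched double cover of the first piece is a fibration over $S^1$ with fiber $\Sigma \setminus B^3$. The content of the roll-spin, as opposed to ordinary spinning, is that the extra $2\pi$ rotation lifts through the branched cover to a non-trivial monodromy; I expect this monodromy to be isotopic to the deck involution $\tau$ of $\Sigma \to S^3$, so that
\[
    \Sigma_2(S^4, \rho K) \;\cong\; \bigl((\Sigma \setminus B^3) \times_\tau S^1\bigr) \,\cup_{S^2 \times S^1}\, \bigl(S^2 \times D^2\bigr).
\]

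Given this description, a Mayer--Vietoris sequence together with the Wang sequence for the mapping torus yields $H_\ast(\Sigma_2(S^4, \rho K)) \cong H_\ast(S^4)$ uniformly in $s$; the only input is that $\Sigma$ is an integral homology sphere. Van Kampen then identifies the fundamental group as the coinvariants
\[
    \pi_1(\Sigma_2(S^4, \rho K)) \;=\; \pi_1(\Sigma) \,\big/\, \bigl\langle\!\bigl\langle \tau_\ast(g)\,g^{-1} : g \in \pi_1(\Sigma) \bigr\rangle\!\bigr\rangle,
\]
so the theorem reduces to showing this quotient is trivial.

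Trivializing the coinvariants is the main obstacle. My plan is to use the Seifert presentation of $\pi_1(\Sigma)$ as a central extension of the triangle group $\Delta(2,3,|6s+1|)$ by the central fiber class $h$, together with two geometric facts about $\tau$: on each regular fiber it acts as an orientation-reversing involution, giving $\tau_\ast(h) = h^{-1}$ and hence $h^2 = 1$ in the quotient; and its fixed-point set is a horizontal circle meeting each exceptional fiber transversely, which should yield $\tau_\ast(x_i) \equiv x_i^{-1}$ modulo powers of $h$ on the exceptional-fiber generators. Substituting into the Seifert relations $x_i^{p_i} = h^{b_i}$ collapses each $x_i$ to a power of $h$, after which the product relation $x_1 x_2 x_3 = h^{-b}$ combined with $h^2 = 1$ trivializes everything. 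If pinning down the $\tau_\ast$-action on the generators proves unwieldy, I would fall back on an orbifold van Kampen argument invoking $\Sigma/\tau = S^3$ to express the coinvariant quotient directly in terms of $\pi_1(S^3) = 1$, bypassing the need for explicit generator-level computations.
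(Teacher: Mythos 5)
The decomposition you start from has the right shape, but the monodromy you assign to it is wrong, and this error removes the entire difficulty of the theorem. The roll $\rho$ is supported in a collar of $\partial\nu(K)$ and rotates in the \emph{longitude} direction; since the longitude lifts to the branched cover (it is the meridian that double covers), the lift $\tilde\rho$ is a Dehn twist along annuli parallel to a lifted longitude and is the identity outside a neighborhood of the fixed knot $\tilde K$. As a diffeomorphism of the closed cover $\Sigma=\Sigma(2,3,|6s+1|)$ it is isotopic to the identity, not to the deck involution; the deck involution appears as the lifted monodromy only for odd twist-spins $\tau^m$. With your model you would in effect be computing the branched double cover of an odd twist-spin, which is diffeomorphic to $S^4$ for \emph{every} knot, so your argument would "prove" the statement with no dependence on $|6s+1|$ at all --- a warning sign, since whether $\Sigma_2(S^4,\rho K)$ is simply connected genuinely depends on the input knot. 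The correct model (for zero twists) is that the exterior of the twin sphere is $(\Sigma\times S^1)\setminus\nu(\gamma)$, where $\gamma$ is a section whose projection to $\Sigma$ is homotopic to the fixed knot: the isotopy from $\tilde\rho$ to the identity drags the branch point once around $\tilde K$. Gluing back $S^2\times D^2$ kills $[\gamma]$, so $\pi_1(\Sigma_2(S^4,\rho K))$ is the quotient of $\pi_1(\Sigma)$ in which the fixed-knot class $\fm$ is made central, and since $\pi_1(\Sigma)$ is perfect this quotient is trivial if and only if $\fm$ \emph{normally generates} $\pi_1(\Sigma)$. That is where the theorem actually lives, and your proposal never touches it.

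Your fallback computations inherit the problem. The coinvariants $\pi_1(\Sigma)/\langle\langle \tau_*(g)g^{-1}\rangle\rangle$ are the fundamental group of the wrong manifold, and the orbifold fallback is also incorrect on its own terms: the quotient it would produce is governed by $\pi_1^{\mathrm{orb}}$ of $(S^3,K)$ with $\Z/2$ cone angle, i.e. $\pi K/\langle\langle\mu^2\rangle\rangle$, an extension of $\Z/2$ by $\pi_1(\Sigma)$ which is typically infinite --- not by $\pi_1(S^3)=1$. To complete a proof along the correct lines you need the step your plan omits: an explicit identification of the class $\fm$ and a computation showing it is a weight element. In the paper this is done by passing to the triangle group $\Delta(2,3,r)$, $r=|6s+1|$: the fixed knot maps to a hyperbolic element $y$, a translation along a reflection line of the triangular tiling, expressed as a word in the rotation generators $a,b,c$; for $q=3$ one uses $b=ac^{-1}$ to show that killing $y$ forces $a$ to commute with $c^{3}$, hence with $c$ since $\gcd(3,r)=1$, so $\Delta/\langle\langle y\rangle\rangle$ is abelian and therefore trivial, and a short central-extension argument with the fiber class $\fz$ (again using perfectness) lifts this to $\pi_1(\Sigma)$. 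Some such computation exploiting the specific $(2,3,r)$ structure is indispensable; the Seifert-presentation manipulations you sketch, which substitute the involution action for the fixed-knot class, cannot replace it.
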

\begin{theorem}\label{thm:planes}
    There exist topologically unknotted embeddings of $\RP^2$ in $S^4$ for which Miyazawa's real Seiberg-Witten invariant takes all odd positive integer values.
\end{theorem}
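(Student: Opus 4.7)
The plan is to apply Miyazawa's construction of exotically unknotted $\RP^2$-knots with input the family of 2-knots $S_s := \rho K(2,3,|6s+1|)$ for $s \in \Z \setminus \{0,-1\}$, and then to read off the resulting values of the real Seiberg-Witten invariants from the work of Kang-Park-Taniguchi. By the time we reach this theorem the main technical pieces are already in hand, and the proof is essentially an assembly of prior results.

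First I would verify the two hypotheses of Miyazawa's construction applied to each $S_s$. Theorem \ref{thm:spheres} shows that $\Sigma_2(S^4, S_s)$ is a homotopy 4-sphere, and via the bidirectional implication observed in this paper, this is equivalent to $S_s \# P_\pm$ being topologically unknotted. Second, the computation of Kang-Park-Taniguchi \cite{kang-park-taniguchi} shows that $|\deg(S_s)|$ realizes every odd positive integer as $s$ varies over $\Z \setminus \{0,-1\}$, providing both the required nonvanishing of $|\deg(S_s)|$ and fine control over its exact value.

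Miyazawa's construction then produces, for each such $s$, a topologically unknotted $\RP^2$-knot $S_s \# P_\pm$ whose real Seiberg-Witten invariant is determined by $|\deg(S_s)|$ via a connect-sum formula, using that $P_\pm$ is unknotted and contributes trivially. Consequently, as $s$ varies these $\RP^2$-knots realize all odd positive integer values of the invariant, which is the content of the theorem. The step most worth being explicit about is the identification of the real Seiberg-Witten invariant of $S_s \# P_\pm$ with $|\deg(S_s)|$, which rests on the connect-sum behaviour already established in Miyazawa's work; once this is spelled out, the assembly of Theorem \ref{thm:spheres}, the bidirectional implication, and the Kang-Park-Taniguchi computation is immediate.
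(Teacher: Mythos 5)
Your proposal takes essentially the same route as the paper's proof: Theorem \ref{thm:spheres} combined with Proposition \ref{prop:planes-vs-spheres} gives topological unknottedness of $(\rho K(2,3,|6s+1|))\# P_\pm$, and Miyazawa's identification $|\deg((\rho K)\# P_\pm)| = |\deg(\rho K)| = |\deg(K)|$ together with the Kang--Park--Taniguchi computation supplies the values. One small correction: over $s\in\Z\setminus\{0,-1\}$ the formula only realizes the odd values $\geq 3$, since the value $1$ occurs exactly when $|6s+1|\in\{1,5\}$, i.e.\ $s\in\{0,-1\}$; so to get \emph{all} odd positive values you should include those two cases (as the paper does, using the unknot and $T(3,5)$) or observe that the standard unknotted $\RP^2$ already realizes $1$.
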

\begin{theorem}\label{thm:involutions}
    There exist homotopy 4-spheres with involutions, and homotopy $\CP^2$s with involutions which are topologically conjugate to complex conjugation, for which Miyazawa's real Seiberg-Witten invariant takes all odd positive integer values. These involutions do not preserve any positive scalar curvature metric and are not smoothly conjugate to standard involutions on $S^4$ and $\CP^2$, respectively.
\end{theorem}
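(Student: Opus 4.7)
The plan is to realize the involutive manifolds as branched double covers of the knotted surfaces from Theorems \ref{thm:spheres} and \ref{thm:planes}, then derive the remaining properties from the nontriviality of Miyazawa's invariant on those surfaces. For the homotopy 4-spheres, take $(X_s, \tau_s) = (\Sigma_2(S^4, \rho K(2,3,|6s+1|)), \tau)$, where $\tau$ is the deck involution of the branched covering; Theorem \ref{thm:spheres} identifies $X_s$ as a homotopy 4-sphere. For the homotopy $\CP^2$s, take $(Y_s, \sigma_s) = (\Sigma_2(S^4, F_s), \sigma)$ where $F_s$ is one of the exotically unknotted $\RP^2$-knots produced in Theorem \ref{thm:planes} and $\sigma$ the deck involution. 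Because $F_s$ is topologically isotopic to some $P_\pm$, whose branched double cover is $\pm\CP^2$ with complex conjugation as deck transformation, $Y_s$ is a homotopy $\CP^2$ and $\sigma_s$ is topologically conjugate to complex conjugation.

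Next I would invoke the definition of Miyazawa's invariant: for a knotted surface $\Sigma\subset S^4$, the number $|\deg(\Sigma)|$ is (up to sign) an equivariant real Seiberg--Witten count on the branched double cover $\Sigma_2(S^4,\Sigma)$, equipped with its deck involution and the real spin-c structure coming from the branching locus. Under this identification, the real SW invariants of the pairs $(X_s,\tau_s)$ and $(Y_s,\sigma_s)$ equal $|\deg|$ of the corresponding surfaces. The computations of \cite{kang-park-taniguchi} together with Miyazawa's construction then guarantee that these attain every odd positive integer as $s$ varies.

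The PSC obstruction is the standard real Seiberg--Witten vanishing: if an involution $\iota$ on a 4-manifold preserves a metric of positive scalar curvature, the equivariant real SW moduli space for the induced real spin-c structure is empty, and the invariant vanishes. Since we have arranged the invariant to be nonzero, neither $\tau_s$ nor $\sigma_s$ can preserve a PSC metric. The non-conjugacy with standard involutions follows immediately: complex conjugation on $\CP^2$ is an isometry of the Fubini--Study metric, and the standard involution on $S^4$ (the deck transformation of the branched double cover of the unknotted 2-sphere) is an isometry of the round metric, both of positive scalar curvature. A smooth conjugacy from $(X_s,\tau_s)$ or $(Y_s,\sigma_s)$ to a standard involution would pull back such a metric to an invariant PSC metric, contradicting nonvanishing of the invariant.

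I expect the main technical hurdle to be administrative rather than conceptual: carefully matching the real spin-c structure whose invariant is computed in \cite{kang-park-taniguchi} with the one appearing in Miyazawa's invariant of the roll-spun 2-knots and the ensuing $\RP^2$-knots, so that the odd-integer values transfer cleanly to the pairs $(X_s,\tau_s)$ and $(Y_s,\sigma_s)$. Once this identification is verified, every remaining step is either an immediate consequence of Theorems \ref{thm:spheres} and \ref{thm:planes} or a direct application of Miyazawa's framework.
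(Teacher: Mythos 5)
Your proposal is correct and follows essentially the same route as the paper: take branched double covers of $\rho K(2,3,|6s+1|)$ and of $(\rho K(2,3,|6s+1|))\#P_\pm$ with their deck involutions, use Theorem \ref{thm:spheres} together with Proposition \ref{prop:planes-vs-spheres} for the topological identifications, transfer the values of $|\deg|$ via Miyazawa's identification with the 3-dimensional invariant computed in \cite{kang-park-taniguchi}, and conclude with Miyazawa's vanishing of the real Seiberg--Witten invariant for PSC-preserving involutions. Your explicit pull-back-the-metric argument for non-smooth-conjugacy is just an unpacking of what the paper leaves implicit.
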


We observe that the branched double cover of a twist-roll-spun knot can be obtained as the result of surgery along a section of a 4-dimensional mapping torus. From this perspective, we are able to strengthen the theorem of Hughes-Kim-Miller \cite{hughes-kim-miller} that the branched double cover of a twist-roll-spun knot is unchanged after adding 4 twists, and prove that Miyazawa's homotopy spheres are (connect-sums of) Gluck twists on $S^4$.

\begin{theorem}\label{thm:gluck}
    For any knot $K\subseteq S^3$ and $m,n,k\in\Z$, The branched double covers of the $m$-twist $n$-roll spin of $K$ and the $(m+2k)$-twist $n$-roll spin of $K$ are related by a $k$-fold Gluck twist.
\end{theorem}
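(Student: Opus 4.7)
The plan is to build on the author's introductory observation that the branched double cover of a twist-roll-spun knot can be realized by surgery along a section of a 4-dimensional mapping torus. Concretely, let $Y$ be the 3-manifold that arises as the fiber of this mapping torus --- morally the branched double cover of $S^3$ over $K$ --- and let $\phi_{m,n} \colon Y \to Y$ be the diffeomorphism lifting the composite twist-roll motion. Write $M_{\phi_{m,n}}$ for the mapping torus, and let $\tilde A \subset M_{\phi_{m,n}}$ be the distinguished section arising from the axis of the twist rotation. The theorem reduces to understanding how
\[
    \Sigma_2(S^4, \tau^m \rho^n K) \;=\; \bigl(M_{\phi_{m,n}} \text{ surgered along } \tilde A\bigr)
\]
changes as $m$ is increased by $2$.

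Surgery along the section $\tilde A$ (a circle in the 4-manifold $M_{\phi_{m,n}}$) removes $\tilde A \times D^3$ and glues in $D^2 \times S^2$; the new 2-sphere $\{0\} \times S^2$ is the lift of the 2-knot to the branched double cover, and the diffeomorphism type of the output depends on a framing of $\tilde A$, an element of a torsor over $\pi_1(SO(3)) = \Z/2$. The key observation is that incrementing $m$ by $2$ composes the monodromy family with a full $2\pi$ rotation of $Y$ about $\tilde A$; as a standalone diffeomorphism this rotation is isotopic to the identity (so $M_{\phi_{m,n}}$ itself is unchanged up to diffeomorphism), but the infinitesimal action on the normal bundle of $\tilde A$ is a loop in $SO(2) \subset SO(3)$ representing the nontrivial class of $\pi_1(SO(3))$. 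Consequently, the framing of the surgery curve $\tilde A$ shifts by this generator.

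Re-framing a surgery on a circle in a 4-manifold by the generator of $\pi_1(SO(3))$ produces a manifold related to the original by a Gluck twist on the cocore sphere --- indeed, this is one standard description of the Gluck construction. Hence the $k = 1$ case follows from the framing shift above, and iterating (equivalently, noting that $m \mapsto m + 2k$ accumulates $k$ full rotations of the normal frame) yields the $k$-fold Gluck twist for arbitrary $k$. This is consistent with --- and refines --- the Hughes--Kim--Miller observation that four additional twists produce a trivial modification, since two iterations of a Gluck twist trivialize.

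The main obstacle is to make the surgery-along-a-section description precise and, critically, to verify that incrementing $m$ by $2$ does introduce exactly one unit of $\pi_1(SO(3))$ into the framing rather than the trivial element. This hinges on a careful local analysis of the lifted twist rotation near $\tilde A$; one must confirm that the induced action on the normal $D^3$-bundle is, after unwinding the branched-cover combinatorics, the familiar $2\pi$-rotation in $SO(2)$ and not (say) a $\pi$-rotation that would be nullhomotopic in $SO(3)$. A secondary point is to verify that the roll parameter $n$ contributes nothing to this framing, so that the Gluck twist count depends purely on $m$.
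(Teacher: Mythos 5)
Your proposal is correct and takes essentially the same route as the paper: the paper realizes $\Sigma_2(S^4,\tau^m\rho^n K)$ as surgery on the section traced by the branch point in the mapping torus of $\tilde\tau^m\tilde\rho^n$, and the explicit isotopy $\phi_t$ (a damped rotation about the fixed knot through meridional angle $\pi m t$, the factor $\pi$ rather than $2\pi$ reflecting that $\tilde\tau$ is only a half Dehn twist upstairs) carries out precisely the local verification you flag, showing that adding $2k$ twists shifts the boundary identification of the glued $S^2\times D^2$ by a $k$-fold Gluck twist --- your framing shift by the generator of $\pi_1(\mathrm{SO}(3))$ --- while the roll parameter only translates the section along the fixed knot and leaves the normal framing untouched. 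One small correction: the belt sphere $\{0\}\times S^2$ of the surgery is the \emph{twin sphere} (the core of the glued $S^2\times D^2$), not the lift of the 2-knot, which is the branch sphere upstairs meeting the twin sphere in two points; this mislabel does not affect your argument.
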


The rest of the paper is organized as follows. In Section \ref{section:branched-covers}, we recall the definition of twist-roll-spun knots and examine the structure of their branched double covers, and we prove Theorem \ref{thm:gluck}. In Section \ref{section:group-theory}, we consider the relationship between the fundamental group of the branched double cover of a knot, the fundamental groups of the branched double covers of its twist-roll-spins, and the knot groups of $\RP^2$-knots constructed by connect-sum. In Section \ref{section:brieskorn-spheres}, we exploit the close relationship between the fundamental groups of Brieskorn spheres and tilings of the hyperbolic plane to prove Theorem \ref{thm:spheres}, and obtain Theorems \ref{thm:planes} and \ref{thm:involutions} as consequences. Finally, in Section \ref{section:open-stuff}, we give some remaining open questions.\\

\noindent{\bf Acknowledgments.} The author thanks his advisor Ciprian Manolescu for many helpful insights and continual encouragement, and for partial support through the Simons Investigator grant. Thanks as well to Gary Guth, Seungwon Kim, and Maggie Miller for helpful conversations at various stages of this project. 

\section{Branched Double Covers of Twist-Roll-Spun Knots}\label{section:branched-covers}

\subsection{Deform-Spun 2-Knots}\label{section:deform-spun-definition} Litherland \cite{litherland} defines deform-spun 2-knots as follows. Let $K\subseteq S^3$ be a knot and let $\vartheta : S^3\xrightarrow{\sim} S^3$ be a diffeomorphism fixing a regular neighborhood $\nu(K)$ pointwise. We may form the mapping torus \begin{equation*}
    M_\vartheta = S^3 \tilde\times_\vartheta S^1.
\end{equation*}
Consider a point $x\in K$. Identify $S^3\setminus \nu(x)$ with $D^3$ and consider the knotted arc $K^\circ = K\setminus \nu(x)$. The boundary of $M_\vartheta\setminus (\nu(x)\tilde\times S^1)$ is canonically identified with $S^2\times S^1$. The deform-spun 2-knot $\vartheta K$ may be defined by a diffeomorphism of pairs
\begin{equation*}
    \big(S^4,\vartheta K\big) \cong \Big(M_\vartheta \setminus \big( \nu(x)\tilde\times S^1 \big),\, K^\circ\tilde\times S^1\Big)\cup_\partial \Big( S^2\times D^2, \partial K^\circ \times S^1 \Big).
\end{equation*}
The 2-knot $\vartheta K$ depends on $\vartheta$ only up to isotopy fixing $\nu(K)$ (pointwise). The group $\mathcal D(S^3,K)$ of \emph{deformations} of $K$ is the group of diffeomorphisms of $S^3$ fixing $\nu(K)$ modulo isotopy fixing $\nu(K)$. 
\begin{remark}
        Litherland actually defines deformations as diffeomorphisms of $D^3$ fixing $K^\circ\cup\partial D^3$, modulo \emph{pseudo}-isotopy fixing $K^\circ\cup \partial D^3$, or equivalently diffeomorphisms of $S^3$ fixing $K\cup\nu(x)$ modulo pseudo-isotopy fixing $K\cup\nu(x)$. Any such deformation has a representative fixing $\nu(K)$, and this representative is unique up to pseudo-isotopy fixing $\nu(K)$. The choice of regular neighborhood $\nu(K)$ is immaterial, since the canonical map to the group of deformations fixing a smaller regular neighborhood is an isomorphism. By the work of Waldhausen \cite{waldhausen} on Haken manifolds, the difference between isotopy and pseudo-isotopy is also immaterial.
\end{remark}
\begin{remark}
        Let $\mathrm{Emb}(K^\circ)$ be the space of proper embeddings $I\hookrightarrow D^3$ which are isotopic to $K^\circ$ and agree with $K^\circ$ on the boundary. Consider the fibration
        \begin{equation*}
        \mathrm{Diff}\big(D^3, K^\circ\cup \partial D^3\big) \longrightarrow \mathrm{Diff}\big(D^3, \partial D^3\big) \longrightarrow \mathrm{Emb}\big(K^\circ\big)
    \end{equation*}
    Hatcher \cite{hatcher} shows that the middle term $\mathrm{Diff}(D^3,\partial D^3)$ is contractible, so there is a homotopy equivalence \begin{equation*}
        \mathrm{Diff}\big(D^3, K^\circ\cup \partial D^3\big)\simeq \Omega\,\mathrm{Emb}\big(K^\circ\big).
    \end{equation*}
    Hence, deformations of $K$ may be viewed as homotopy classes of self-isotopies of $K^\circ$ fixing $\partial K^\circ$. 
\end{remark}

Let $T=\partial \nu(K^\circ) \cong T^2$ and parameterize a neighborhood of $T$ as $\nu(T) \cong T\times [0,1]\cong  \lambda\times\mu\times [0,1]$, where $\lambda,\mu$ are a longitude, meridian of $K$. Consider the diffeomorphism \begin{equation*}
    \tau^m\rho^n:S^3\xrightarrow{\sim} S^3
\end{equation*}
supported on $\nu(T)$ where \begin{equation*}
    \tau^m\rho^n(\theta_1,\theta_2,t) = (\theta_1 + 2\pi nt, \theta_2 + 2\pi mt, t), \quad (\theta_1,\theta_2,t)\in \nu(T).
\end{equation*}
The deform-spun 2-knot $\tau^m\rho^n K$ is the \emph{$m$-twist $n$-roll spin} of $K$.

\subsection{The Structure of the Branched Double Cover}\label{section:branched-cover-structure}
The branched double cover over a twist-roll-spun knot decomposes as \begin{equation}\label{eq:sphere-decomp}
    \Sigma_2(S^4,\tau^m\rho^n K) \cong \Big(  M_{\tilde\tau^m\tilde\rho^n} \setminus \big(\nu(\tilde x)\tilde \times S^1\big)\Big)\cup_\partial S^2\times D^2
\end{equation}
where $\tilde\tau,\tilde\rho$ are diffeomorphisms lifting $\tau,\rho$ to the branched double cover and $M_{\tilde\tau^m\tilde\rho^n}$ is the mapping torus. 

We proceed to describe this decomposition more explicitly. Consider the branched double cover $\Sigma_2(S^3,K)$. Let $\iota_K$ denote the branching involution, let $\tilde K\subseteq \Sigma_2(S^3, K)$ denote the fixed point set of $\iota_K$, and let $\tilde x$ denote the preimage of the point $x$ under the quotient map. Then, for an appropriately chosen 3-ball neighborhood $\nu(\tilde x)$, the involution $\iota_K$ restricts to an involution of $\Sigma_2(S^3,K)\setminus \nu(\tilde x)$ realizing the double cover of $D^3$ branched over $K^\circ$. Let $\iota_K^\circ$ denote the restricted branching involution and $\tilde K^\circ$ the fixed point set. 

Let $\tilde T = \partial\nu(\tilde K^\circ)$, let $\tilde \lambda\subseteq \tilde T$ denote a component of the preimage of $\lambda$, and let $\tilde \mu\subseteq \tilde T$ denote the preimage of $\mu$. We can then define diffeomorphisms $\tilde\tau,\tilde\rho: \Sigma_2(S^3, K) \xrightarrow{\sim}\Sigma_2(S^3, K)$ lifting $\tau,\rho$. View $\nu(\tilde T)$ as a collar of a slightly enlarged regular neighborhood $\nu(\tilde K)$. Both diffeomorphisms are the identity on $\nu(\tilde K)\setminus \nu(\tilde T)$. On $\nu(\tilde T)$, the diffeomorphism $\tilde \tau$ applies a half Dehn twist to every annulus of the form $\tilde \mu \times I$, while $\tilde \rho$ applies a full Dehn twist to every annulus of the form $\tilde \lambda \times I$. Outside of $\nu(\tilde K)$, the diffeomorphism $\tilde \tau$ is equal to the involution $\iota_K$, while $\tilde\rho$ is equal to the identity. The branched double cover of $\tau^m\rho^n K$ is \begin{equation}\label{eq:branched-cover-decomp}
    \Sigma_2\big(S^4,\tau^m\rho^n K\big) \cong \Big( \Sigma_2(D^3, K^\circ)\tilde\times_{\tilde\tau^m\tilde\rho^n} S^1\Big)\cup_\partial S^2\times D^2 .
\end{equation}

We will occasionally reference the fact that the branched double cover of $\tau^m\rho^n K$ is a rational homology sphere, so we record the homology here as a lemma. \begin{lemma}
    When $m$ is odd, $\Sigma_2(S^4,\tau^m\rho^n K)$ is an integer homology sphere. When $m$ is even, $\Sigma_2(S^4, \tau^m\rho^n K)$ is a rational homology sphere, with $H_1(\Sigma_2(S^4, \tau^m\rho^n K)) \cong H_1(\Sigma_2(S^3, K))$. In particular, when $m$ is even, $|H_1(\Sigma_2(S^4, \tau^m\rho^n K))| = \det(K)$. 
\end{lemma}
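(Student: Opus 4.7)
I would compute the homology of $X := \Sigma_2(S^4, \tau^m\rho^n K)$ by combining the Wang sequence for the mapping torus $M := \Sigma_2(D^3, K^\circ) \tilde\times_{\tilde\tau^m\tilde\rho^n} S^1$ with Mayer--Vietoris for the decomposition \eqref{eq:branched-cover-decomp}. Write $\Sigma := \Sigma_2(S^3, K)$ and $A := H_1(\Sigma;\Z)$, so that $\Sigma$ is a rational homology $3$-sphere with $|A| = \det(K)$; note that $\det(K) = |\Delta_K(-1)|$ is always odd because $\Delta_K(1) = \pm 1$. The punctured version $\Sigma^\circ := \Sigma_2(D^3, K^\circ)$ then has $H_0 = \Z$, $H_1 = A$, and $H_i = 0$ for $i \geq 2$.

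The key step is to show that $\phi_* := (\tilde\tau^m\tilde\rho^n)_*$ acts on $A$ as $(-1)^m$. Every class in $A$ is represented by a cycle $c$ in $\Sigma \setminus \nu(\tilde K)$; from the description in \S\ref{section:branched-cover-structure}, $\tilde\rho$ acts as the identity on this subset while $\tilde\tau$ restricts to the branching involution $\iota_K$, so $\phi_*[c] = \iota_{K,*}^m[c]$. To pin down $\iota_{K,*}$, I would invoke the transfer identity $\mathrm{tr} \circ \pi_* = 1 + \iota_{K,*}$ for the free unramified double cover $\Sigma \setminus \nu(\tilde K) \to S^3 \setminus \nu(K)$: the target has $H_1$ generated by the meridian $\mu$, and $\mathrm{tr}(\mu) = [\tilde\mu]$ bounds a meridian disk in $\nu(\tilde K) \subseteq \Sigma$, so the composition is trivial once pushed to $A$, forcing $\iota_{K,*} = -1$.

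Given this action, the Wang sequence produces $H_1(M) \cong A \oplus \Z$ when $m$ is even (since $\phi_* - 1 = 0$ on $A$) and $H_1(M) \cong \Z$ when $m$ is odd (since $\phi_* - 1 = -2$ is invertible on the odd-order group $A$). Mayer--Vietoris for $X = M \cup_{S^2 \times S^1}(S^2 \times D^2)$ then quotients out the $\Z$-summand of $H_1(M)$ generated by the base $S^1$ of the mapping torus, yielding $H_1(X) = A$ when $m$ is even and $H_1(X) = 0$ when $m$ is odd. The same pair of sequences gives $H_2(M) = \ker(\phi_* - 1|_A) = 0$ and hence $H_2(X) = 0$ in the odd case, so $X$ is an integer homology $4$-sphere (Poincar\'e duality then handles $H_3$).

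The main obstacle is the identification $\phi_* = (-1)^m$; once this is in hand, the rest is a routine double application of long exact sequences. In particular, the argument requires combining the local description of $\tilde\tau, \tilde\rho$ near $\nu(\tilde K)$ with the standard integral statement that the branching involution acts by $-1$ on $H_1$ of a branched double cover over $S^3$, which is possible precisely because $\det(K)$ is odd.
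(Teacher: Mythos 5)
Your proposal is correct and follows essentially the same route as the paper: the same decomposition \eqref{eq:branched-cover-decomp} analyzed by Mayer--Vietoris, with the identification $H_1 \cong H_1(\Sigma_2(S^3,K))/(1-\tilde\tau^m_*\tilde\rho^n_*)$ appearing there implicitly via your Wang-sequence step, and your transfer argument correctly supplies the standard fact the paper uses without spelling out, namely that the lifted monodromy acts on $H_1$ as $(-1)^m$ and that $1-\iota_{K*}=2$ is invertible because $\det(K)$ is odd. The only cosmetic difference is at the end: the paper handles $H_2$ and $H_3$ in both parities at once via $\chi=2$ together with Poincar\'e duality and universal coefficients, whereas you compute them from the Wang and Mayer--Vietoris sequences, which also works (in the even case the same computation gives $H_2$ finite, so the rational homology sphere conclusion still follows).
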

\begin{proof}
    By examining the Mayer-Vietoris sequence for the decomposition \eqref{eq:branched-cover-decomp}, we find that there is an isomorphism \begin{equation*}
        H_1\big(\Sigma_2\big(S^4,\tau^m\rho^n K\big);\Z\big) \cong H_1\big(\Sigma_2(S^3,K);\Z\big) / (1-\tilde\tau^m_*\tilde\rho^n_*).
    \end{equation*}
    When $m$ is even, $\tilde\tau^m_*\tilde\rho^n_*$ is the identity. When $m$ is odd, $\tilde\tau^m\tilde\rho^n$ is isotopic to the branching involution $\iota_K$, and $1-\tilde\tau^m_*\tilde\rho^n_* = 1 - \iota_{K*}$ is an isomorphism. Therefore, \begin{equation*}
        H_1\big(\Sigma_2\big(S^4,\tau^m\rho^n K\big);\Z\big) \cong \begin{dcases}
            0,& m\equiv1\pmod 2 \\
            H_1\big(\Sigma_2(S^3,K);\Z\big),& m\equiv 0\pmod 2
        \end{dcases}
    \end{equation*}
    In the even case, $H_1$ is finite of order $\det(K)$. The Euler characteristic of $\Sigma_2(S^4, \tau^m\rho^n K)$ is \begin{equation*}
        \chi\big(\Sigma_2\big(S^4, \tau^m\rho^n K\big)\big) = \chi\big( M_{\tilde\tau^m\tilde\rho^n} \setminus \big(\nu(\tilde x)\tilde \times S^1\big) \big) + \chi\big(S^2\times D^2\big) - \chi\big(T^3\big) = 0 + 2 - 0 = 2.
    \end{equation*}
    Since $\Sigma_2(S^4, \tau^m\rho^n K)$ has Euler characteristic $2$ and $H_1$ finite, Poincar\'e duality and universal coefficients imply that it is a rational homology sphere with $H_1=H_2$ and $H_3=0$. 
\end{proof}
\begin{corollary}\label{cor:homology-sphere}
    The branched double cover $\Sigma_2(S^4, \tau^m\rho^n K)$ is an integer homology sphere if and only if $m$ is odd or $\det(K)=1$.
\end{corollary}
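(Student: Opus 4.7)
The plan is to read the corollary off the preceding lemma almost verbatim, since the lemma already computes $H_1$ of the branched double cover completely. My first step will be simply to quote the lemma: $H_1\big(\Sigma_2(S^4,\tau^m\rho^n K);\Z\big)$ is trivial when $m$ is odd, and is isomorphic to $H_1\big(\Sigma_2(S^3,K);\Z\big)$---a finite group of order $\det(K)$---when $m$ is even.

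My second step will be to observe that the space in question is an integer homology $4$-sphere if and only if its $H_1$ vanishes. This follows from the lemma's Euler characteristic computation $\chi = 2$ together with Poincar\'e duality and universal coefficients: these force $H_3 = 0$ and $H_2 \cong H_1$ whenever $H_1$ is finite, so killing $H_1$ kills $H_2$ and $H_3$ as well. Combining the two steps, $\Sigma_2(S^4,\tau^m\rho^n K)$ is an integer homology sphere precisely when $m$ is odd, or $m$ is even and $\det(K) = 1$, which collapses to the disjunction stated in the corollary. There is essentially no obstacle beyond verifying this logical equivalence; the corollary is a direct repackaging of the lemma.
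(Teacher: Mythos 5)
Your proof is correct and is exactly how the paper intends the corollary to be read: it is an immediate repackaging of the preceding lemma, using the finiteness of $H_1$ together with $\chi=2$, Poincar\'e duality, and universal coefficients to reduce ``integer homology sphere'' to the vanishing of $H_1$. Nothing further is needed.
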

\begin{corollary}\label{cor:homotopy-sphere}
    The branched double cover $\Sigma_2(S^4, \tau^m\rho^n K)$ is a homotopy sphere if and only if it is simply-connected.
\end{corollary}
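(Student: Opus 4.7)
The forward direction is immediate: any homotopy 4-sphere is simply connected. For the converse, I will assume $\Sigma_2(S^4,\tau^m\rho^n K)$ is simply connected and verify that it has the integral homology of $S^4$; a standard Hurewicz/Whitehead argument then concludes.

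First, by the Hurewicz theorem, $H_1 = 0$. When $m$ is odd, the preceding lemma already asserts we have an integer homology sphere. When $m$ is even, the lemma only guarantees a rational homology sphere with $|H_1| = \det(K)$, but simple connectivity forces $\det(K) = 1$, so $H_1$ vanishes regardless. Either way, we have a closed oriented $4$-manifold with $\pi_1 = 0$, $H_1 = 0$, and Euler characteristic $2$.

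Next I would invoke Poincar\'e duality and universal coefficients: $H_3 \cong H^1 \cong \operatorname{Hom}(H_1,\Z) = 0$, and $H_2 \cong H^2 \cong \operatorname{Hom}(H_2,\Z) \oplus \operatorname{Ext}(H_1,\Z)$ shows that $H_2$ is torsion-free. Combined with $\chi = 2$, this forces the second Betti number to vanish, so $H_2 = 0$. Thus $H_\ast(\Sigma_2(S^4,\tau^m\rho^n K);\Z) \cong H_\ast(S^4;\Z)$.

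Finally, Hurewicz gives $\pi_2 = \pi_3 = 0$ and that $\pi_4 \to H_4 \cong \Z$ is an isomorphism; a generator is represented by a degree-one map $S^4 \to \Sigma_2(S^4,\tau^m\rho^n K)$ which is then a homology isomorphism between simply connected CW complexes, hence a homotopy equivalence by Whitehead's theorem. The substantive content here was front-loaded into the preceding lemma; the one point that requires any attention is the even-$m$ case, where one must notice that simple connectivity kills the \emph{a priori} possible $\det(K)$-torsion in $H_1$ and places us back in the integer-homology-sphere regime.
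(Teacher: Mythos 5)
Your proposal is correct and follows essentially the same route as the paper, which states this corollary without proof as an immediate consequence of the preceding homology lemma: simple connectivity kills $H_1$, the rational homology sphere condition (with $\chi=2$ and $H_1\cong H_2$, $H_3=0$ from the lemma's proof) then forces $H_2=0$, and the standard Hurewicz--Whitehead argument finishes. Your added details (torsion-freeness of $H_2$ via Poincar\'e duality and universal coefficients, the degree-one map $S^4\to\Sigma_2(S^4,\tau^m\rho^n K)$) are exactly the implicit content of the paper's statement.
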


We are interested in the question of when $\Sigma_2(S^4, \tau^m\rho^n K)$ is a homotopy sphere, and when it is diffeomorphic to the standard $S^4$. For odd-twist-spins the branched double cover is always diffeomorphic to $S^4$. 
\begin{lemma}
    If $m$ is odd, then $\Sigma_2(S^4, \tau^m K)$ is diffeomorphic to $S^4$.
\end{lemma}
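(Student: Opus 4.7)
The plan is to reduce to the case $m = 1$ via the decomposition \eqref{eq:branched-cover-decomp} and then invoke Zeeman's theorem. With $n = 0$, the decomposition reads
\[
\Sigma_2(S^4, \tau^m K) \cong \bigl(\Sigma_2(D^3, K^\circ) \tilde\times_{\tilde\tau^m} S^1\bigr) \cup_\partial S^2 \times D^2.
\]
The proof of the preceding homology lemma already showed that $\tilde\tau^m$ is smoothly isotopic to the branching involution $\iota_K$ for every odd $m$. Since mapping tori of isotopic diffeomorphisms are diffeomorphic via a canonical isomorphism preserving the natural boundary structure, the closed 4-manifold on the right-hand side depends only on the parity of $m$. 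Hence $\Sigma_2(S^4, \tau^m K) \cong \Sigma_2(S^4, \tau K)$ for every odd $m$.

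By Zeeman's fibration theorem, $S^4 \setminus \tau K$ fibers smoothly over $S^1$ with fiber $\Sigma_1(S^3, K) \setminus \mathrm{int}(B^3) \cong D^3$ and trivial monodromy, so $S^4 \setminus \tau K \cong D^3 \times S^1$ and $\tau K$ is smoothly isotopic to the unknotted 2-sphere $U_2 \subseteq S^4$. Therefore
\[
\Sigma_2(S^4, \tau K) \cong \Sigma_2(S^4, U_2) \cong S^4,
\]
completing the argument.

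The main subtlety is verifying the first paragraph's implicit claim that the right-hand side of the decomposition depends only on the isotopy class of $\tilde\tau^m$ as a diffeomorphism. Although mapping tori of isotopic maps are diffeomorphic, the identification of the boundary mapping torus with $S^2 \times S^1$ used to glue $S^2 \times D^2$ can a priori depend on the chosen isotopy. This ambiguity is absorbed by self-diffeomorphisms of $S^2 \times S^1$ that extend over $S^2 \times D^2$: using the explicit local description of $\tilde\tau$ given just before \eqref{eq:branched-cover-decomp}, the restriction of $\tilde\tau^m$ to a collar of $\partial \Sigma_2(D^3, K^\circ) \cong S^2$ agrees with $\iota_K$ outside a pair of small disks, and any residual boundary ambiguity extends across the solid handle $S^2 \times D^2$, so the closed 4-manifold is well-defined up to diffeomorphism.
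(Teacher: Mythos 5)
There is a genuine gap in your first and last paragraphs. The isotopy class of the lifted monodromy determines the mapping torus, and hence the exterior of the twin sphere, but it does not determine the closed manifold on the right-hand side of \eqref{eq:branched-cover-decomp}: the identification of the boundary with $S^2\times S^1$ used to glue in $S^2\times D^2$ changes by a Gluck rotation each time $m$ changes by $2$. This is exactly the content of Theorem \ref{thm:gluck} and its proof, where the diffeomorphism of mapping tori induced by a choice of isotopy is seen to change the boundary framing by an $m/2$-fold (or, for odd $m$, $(m\mp1)/2$-fold) Gluck twist. Your closing claim that ``any residual boundary ambiguity extends across the solid handle $S^2\times D^2$'' is precisely what fails: the Gluck rotation of $S^2\times S^1$ does not extend to a diffeomorphism of $S^2\times D^2$, and whether a Gluck twist on $S^4$ returns $S^4$ is a well-known open problem. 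If your argument were valid, the branched double cover would depend only on $m\bmod 2$, which is strictly stronger than the $m\bmod 4$ periodicity of Hughes--Kim--Miller; indeed Corollary \ref{cor:4-twists} can only compare $m$ with $m+2$ after a connect-sum with $\pm\CP^2$. Concretely, already for $m=3$ your reduction only shows that $\Sigma_2(S^4,\tau^3K)$ is a Gluck twist on $\Sigma_2(S^4,\tau K)\cong S^4$, which a priori could be exotic.

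The lemma is nevertheless true, and your Zeeman step is fine; the correct reduction is modulo $4$, not modulo $2$. Adding $4$ twists changes the boundary gluing by an even power of the Gluck rotation, which is isotopic to the identity and hence does extend over $S^2\times D^2$ (equivalently, invoke Theorem 1.1 of Hughes--Kim--Miller, as the paper does). This reduces any odd $m$ to $m=\pm1$, and Zeeman's theorem applies to both the $+1$- and $-1$-twist-spins, which are unknotted; the two cases $m\equiv1$ and $m\equiv3\pmod 4$ are thus covered separately, rather than by identifying $\tau^3K$ with $\tau K$ as you attempt.
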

\begin{proof}
    By Theorem 1.1 of \cite{hughes-kim-miller}, the branched double covers of $\tau^m K$ and $\tau^{m+4}K$ are diffeomorphic, so we may reduce to the case $m=\pm1$. Zeeman \cite{zeeman} proves that the $\pm1$-twist-spin of any knot is unknotted, so the branched double cover is $S^4$.
\end{proof}
\begin{remark}
    In general, when $(m,r)=1$, the $r$-fold cyclic branched cover of $\tau^m K$ is diffeomorphic to $S^4$, and the fixed knots in $S^4$ arising from these branched coverings are \emph{branched twist-spins}. Plotnick \cite{plotnick-2} proves that branched twist-spins are exactly the 2-knots which are fibered with finite-order monodromy.
\end{remark}

In other cases, the branched double cover is not generally simply-connected, so there is some inherent dependency on the input knot.

\subsection{Gluck Twists Along the Twin Sphere}\label{section:gluck-twist}

Hughes, Kim, and Miller \cite{hughes-kim-miller} study the branched double covers of twist-roll spun knots and show that for all knots $K$ and all parameters $m,n\in\Z$, there is a diffeomorphism \begin{equation*}
    \Sigma_2\big( S^4, \tau^m\rho^n K \big) \cong \Sigma_2(\tau^{m+4}\rho^n K).
\end{equation*}
That is, the smooth 4-manifold realizing the branched double cover of $\tau^m\rho^n K$ only depends on $m\pmod 4$. Their proof uses knot-complement surgery on unknotted tori in $S^4$. We will show that this fact can also be understood in terms of Gluck twists. This is the content of Theorem \ref{thm:gluck}.

Consider the decompositions \eqref{eq:sphere-decomp} and \eqref{eq:branched-cover-decomp} of $S^4$ and of the branched double cover. We will call the core of the $S^2\times D^2$ piece in each decomposition the \emph{twin sphere}. The union of a regular neighborhood of $\tau^m\rho^n K$ or of the fixed sphere upstairs and a regular neighborhood of the respective twin sphere together form what Montesinos \cite{montesinos-twin} calls a \emph{twin}, a pair of $S^2\times D^2$s plumbed at two points with opposite signs. As an abstract 4-manifold, a twin is diffeomorphic to the exterior of an unknotted torus in $S^4$. We now have the set-up to prove Theorem \ref{thm:gluck}.
\begin{proof}[Proof of Theorem \ref{thm:gluck}]
    We will prove that $\Sigma_2(S^4,\tau^m\rho^n K)$ and $\Sigma_2(S^4, \tau^{m+2k}\rho^n K)$ are related by a $k$-fold Gluck twist along the twin sphere. 
    
    First, assume that $m$ is even. As a diffeomorphism of $\Sigma_2(S^3, K)$, the lifted monodromy $\tilde\tau^m\tilde\rho^n$ is isotopic to the identity. In fact, $\Sigma_2(D^3, K^\circ)\tilde\times S^1 \cong (\Sigma_2(S^3, K) \times S^1) \setminus \nu(\gamma)$ where $\gamma$ is an embedded closed curve whose projection of $\gamma$ onto the $S^1$ factor is a degree 1 map of the circle. The diffeomorphism type of $\Sigma_2(D^3, K^\circ)\tilde\times S^1$ depends only on the (unbased, unoriented) homotopy class of $\gamma$, which in turn depends only on the homotopy class of its projection to $\Sigma_2(S^3, K)$. 
    
    An isotopy $\phi_t$ of $\Sigma_2(S^3, K)$ taking the identity to $\tilde\tau^m\tilde\rho^n$ can be described as follows. The isotopy $\phi_t$ is supported on a neighborhood of the fixed point set $\tilde K$. Equip $\nu(\tilde K) \cong \tilde K \times D^2$ with coordinates $(\theta_1, r, \theta_2)$ such that holding $\theta_2$ constant and varying $\theta_1$ traces out $\tilde\lambda$, and let $b(r)$ be a bump function with $b(r)=0$ for $r>1$ and $b(r)=1$ for $r<1/2$. Then, \begin{equation*}
        \phi_t(\theta_1,r,\theta_2) = (\theta_1 + 2\pi b(r) nt, r, \theta_2 + \pi b(r)mt).
    \end{equation*} 
    
    The curve $\gamma$ can be parameterized as $\gamma(t) = (\phi_t(\tilde x), t)$. The projection of $\gamma$ to $\Sigma_2(S^3, K)$ is homotopic to $n$ times the fixed point set of $\Sigma_2(S^3, K)$, independently of $m$. Both the mapping torus $\Sigma_2(D^3,K^\circ)\tilde\times S^1$ and $(\Sigma_2(S^3, K) \times S^1)\setminus \nu(\gamma)$ have their boundaries canonically identified with $S^2\times S^1$. Under the diffeomorphism defined by restricting the isotopy $\phi_t$, the respective identifications of the boundaries differ by an $m/2$-fold Gluck twist. Hence, the exteriors of the twin spheres in $\Sigma_2(S^4, \tau^m\rho^n K)$ and $\Sigma_2(S^4, \tau^{m+2k}\rho^n K)$ are diffeomorphic, but the framings of the boundaries differ by a $k$-fold Gluck twist.

    The proof in the case that $m$ is odd is essentially the same, except that $\tilde\tau^m\tilde\rho^n$ is isotopic to the branching involution, and the exterior of the twin sphere is the mapping torus $\Sigma_2(D^3, K^\circ)\tilde\times_{\iota_K} S^1$.
\end{proof}

Theorem \ref{thm:gluck} allows us to re-prove the theorem of Hughes-Kim-Miller on adding 4 twists, and to deduce that the branched double cover of $\rho P(-2,3,7)$ is a Gluck twist on $S^4$. Since Gluck twists stabilize after connect-summing with $\pm\CP^2$, we also obtain a re-proof that Miyazawa's homotopy $\CP^2$s are standard.

\begin{corollary}\label{cor:4-twists}(Theorem 1.1 in \cite{hughes-kim-miller})
    There is a diffeomorphism between the branched double covers $\Sigma_2(S^4,\tau^m\rho^n K)$ and $\Sigma_2(S^4,\tau^{m+4k}\rho^n K)$. If either of $\Sigma_2(S^4,\tau^m\rho^n K)$ or $\Sigma_2(S^4,\tau^{m+2k}\rho^n K)$ are simply-connected, then the two become diffeomorphic after one connect-sum with $\pm \CP^2$.
\end{corollary}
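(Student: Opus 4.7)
The plan is to deduce both claims directly from Theorem \ref{thm:gluck}.

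For the first statement, apply Theorem \ref{thm:gluck} with $2k$ in place of $k$: the branched double covers $\Sigma_2(S^4,\tau^m\rho^n K)$ and $\Sigma_2(S^4,\tau^{m+4k}\rho^n K)$ are related by a $2k$-fold Gluck twist along the twin sphere. Since a single Gluck twist reglues via the nontrivial loop in $\operatorname{SO}(3)$ and $\pi_1\operatorname{SO}(3)=\Z/2$, the Gluck twist has order two in $\pi_0\,\mathrm{Diff}(S^2\times S^1)$. Consequently, any even iterate of the Gluck twist has reglueing diffeomorphism isotopic to the identity, and the two covers are diffeomorphic.

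For the second statement, Theorem \ref{thm:gluck} relates the two covers by a $k$-fold Gluck twist along the twin sphere; the even case follows from the first part, so assume $k$ is odd. A Van Kampen argument applied to the decomposition of each cover into the exterior of the twin sphere and a copy of $S^2\times D^2$ (respectively, its Gluck-twisted regluing) shows that the fundamental group is unchanged by the twist, since $S^2\times D^2$ is simply-connected and the two reglueings agree on $\pi_1$ of the pasting region. Hence if one cover is simply-connected then so is the other. The conclusion then follows from the standard stabilization principle that a Gluck twist along a 2-sphere in a simply-connected 4-manifold becomes trivial after a single connect-sum with $\pm\CP^2$, with the sign depending on the framing data.

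The first part is essentially an elementary mapping-class-group observation, once Theorem \ref{thm:gluck} is in hand. The only real obstacle is in the second part, namely pointing to the correct stabilization lemma for Gluck twists in the simply-connected setting; this is a classical feature of the Gluck construction and is implicit in the discussion of \cite{hughes-kim-miller} that motivates the corollary.
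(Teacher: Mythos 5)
Your proposal is correct and follows essentially the same route as the paper: both deduce the statement from Theorem \ref{thm:gluck}, use that an even iterate of the Gluck twist is isotopic to the identity (so $4k$ extra twists change nothing), note that Gluck twisting does not affect $\pi_1$, and then invoke the standard fact that connect-summing with $\pm\CP^2$ undoes a Gluck twist in the simply-connected setting (the paper cites Gompf--Stipsicz, Exercise 5.2.7 for this). Your write-up merely spells out details (the $\pi_1\operatorname{SO}(3)=\Z/2$ and Van Kampen arguments) that the paper leaves implicit.
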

\begin{proof}
    Adding $4k$ twists induces an even-order Gluck twist on the branched double covers, which does nothing. Adding $2k$ twists either does nothing or induces a Gluck twist. If either branched cover is simply-connected then both are simply-connected, and connect-summing with $\pm\CP^2$ undoes any Gluck twisting (cf. Gompf-Stipsicz \cite{gompf-stipsicz}, Exercise 5.2.7).
\end{proof}
\begin{corollary}\label{cor:miyazawa-gluck-twist}(Corollary 1.2 in \cite{hughes-kim-miller})
    The branched double cover $\Sigma_2(\rho P(-2,3,7))$ is the result of a Gluck twist on $S^4$.
\end{corollary}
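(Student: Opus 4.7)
The plan is to apply Theorem \ref{thm:gluck} with $K = P(-2,3,7)$, $m = 0$, $n = 1$, and $k = 1$, which shows that $\Sigma_2(\rho P(-2,3,7))$ and $\Sigma_2(\tau^2 \rho P(-2,3,7))$ are related by a single Gluck twist along the twin sphere. The corollary will therefore follow once I identify $\Sigma_2(\tau^2 \rho P(-2,3,7))$ with the standard $S^4$: the Gluck twist on $S^4$ exhibiting $\Sigma_2(\rho P(-2,3,7))$ will then be supported on the twin sphere from this decomposition.

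To make this identification, I would analyze the decomposition \eqref{eq:branched-cover-decomp} in the case $m = 2$, $n = 1$. The twin-sphere complement takes the form $(\Sigma(2,3,7) \times S^1) \setminus \nu(\gamma)$, with $\gamma$ homotopic to the lifted knot $\tilde K \subseteq \Sigma(2,3,7)$, glued to $S^2 \times D^2$ via the framing determined by the lifted monodromy $\tilde\tau^2 \tilde\rho$. One natural route is to recognize this complement as the exterior in $S^4$ of a specific 2-knot whose meridian corresponds to $\gamma$, perhaps by exploiting the Seifert-fibered structure of $\Sigma(2,3,7)$ or a suitable relation between $\tau$ and $\rho$ in the deformation group $\mathcal{D}(S^3, P(-2,3,7))$. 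An alternative is to exhibit an explicit handle decomposition of $\Sigma_2(\tau^2 \rho P(-2,3,7))$ that cancels to the standard handle decomposition of $S^4$.

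The main obstacle is making the identification $\Sigma_2(\tau^2 \rho P(-2,3,7)) \cong S^4$ precise. The mapping-torus complement has nontrivial fundamental group (isomorphic to $\pi_1(\Sigma(2,3,7)) \times \mathbb{Z}$ before capping), so the equivalence with $S^4$ is not visible at the level of elementary invariants and must proceed either by an explicit diffeomorphism argument, a Kirby-calculus cancellation, or the exhibition of a distinguished 2-knot in $S^4$ with the correct complement. Once this step is complete, Corollary \ref{cor:miyazawa-gluck-twist} follows immediately from Theorem \ref{thm:gluck}, and in particular reproves Corollary 1.2 of \cite{hughes-kim-miller}.
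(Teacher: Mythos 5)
There is a genuine gap, and you have correctly located it yourself: everything hinges on the identification $\Sigma_2(S^4,\tau^2\rho P(-2,3,7))\cong S^4$, and your proposal does not actually prove it. Without further input, all one can say about $\Sigma_2(S^4,\tau^2\rho P(-2,3,7))$ is that it is a homotopy $4$-sphere (by Lemma \ref{lem:branched-cover-fundamental-group} and Corollary \ref{cor:homotopy-sphere-weight}, since the fixed knot normally generates $\pi_1(\Sigma(2,3,7))$), and upgrading a homotopy $4$-sphere to the standard $S^4$ is exactly the kind of statement that cannot be waved at with ``a suitable handle cancellation'' or ``the Seifert-fibered structure of $\Sigma(2,3,7)$'' --- indeed, whether the closely related homotopy spheres of Theorem \ref{thm:spheres} are standard is left open in this paper. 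So the second half of your argument is a restatement of the difficulty, not a proof.

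The missing concrete ingredient is Teragaito's observation \cite{teragaito} that $\tau^{18}\rho P(-2,3,7)$ is \emph{unknotted}, which rests on the fact that $18$-surgery on $P(-2,3,7)$ is a lens space; hence $\Sigma_2(S^4,\tau^{18}\rho P(-2,3,7))\cong S^4$ on the nose. The paper then applies Theorem \ref{thm:gluck} directly: passing from $m=18$ to $m=0$ is a $(-9)$-fold Gluck twist along the twin sphere, and an odd iterate of a Gluck twist is a single Gluck twist, so $\Sigma_2(S^4,\rho P(-2,3,7))$ is a Gluck twist on $S^4$. If you prefer your reduction to $m=2$, you can get there legitimately by combining Teragaito with Corollary \ref{cor:4-twists} (since $18\equiv 2\pmod 4$), and then your application of Theorem \ref{thm:gluck} with $k=1$ finishes the argument; but some such external identification of one member of the twist family with an unknotted $2$-knot (equivalently, a lens space surgery on $P(-2,3,7)$) is indispensable, and your proposal as written does not supply it.
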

\begin{proof}
    Teragaito \cite{teragaito} observes that $\tau^{18}\rho P(-2,3,7)$ is unknotted, using the fact that 18-surgery on $P(-2,3,7)$ is a lens space. Therefore, $\Sigma_2(S^4,\rho P(-2,3,7))$ is the result of a Gluck twist on $\Sigma_2(S^4,\tau^{18}\rho P(-2,3,7))\cong S^4$.
\end{proof}
\begin{corollary}
    Let $P_\pm\subseteq S^4$ be an unknotted $\RP^2$ with normal Euler number $\pm 2$. For all $t\in\Z^{>0}$, the branched double cover $\Sigma_2(S^4, (\rho P(-2,3,7))^{\# t}\# P_\pm)$ is diffeomorphic to $\mp\CP^2$.
\end{corollary}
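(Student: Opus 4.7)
The plan is to reduce directly to Corollary \ref{cor:miyazawa-gluck-twist} together with the standard fact that a Gluck twist on $S^4$ is absorbed by a single connect-sum with $\pm\CP^2$.

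First I would apply the well-known observation that branched double covers commute with the connect-sum operation on surfaces in $S^4$. Iterating this $t$ times on $(\rho P(-2,3,7))^{\# t} \# P_\pm$ yields
\begin{equation*}
    \Sigma_2\big(S^4, (\rho P(-2,3,7))^{\# t} \# P_\pm\big) \cong \big(\#^t \,\Sigma_2(S^4, \rho P(-2,3,7))\big)\, \# \,\Sigma_2(S^4, P_\pm).
\end{equation*}
The last factor is $\Sigma_2(S^4, P_\pm) \cong \mp\CP^2$, since $P_\pm$ is the standard $\RP^2$ realized as the real locus of $\mp\CP^2$ under complex conjugation, with sign determined by the normal Euler number.

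Next, Corollary \ref{cor:miyazawa-gluck-twist} identifies each of the $t$ factors $\Sigma_2(S^4, \rho P(-2,3,7))$ as a Gluck twist on $S^4$. By Gompf-Stipsicz \cite{gompf-stipsicz}, Exercise 5.2.7 (the same fact already invoked in the proof of Corollary \ref{cor:4-twists}), any Gluck twist $X$ on $S^4$ satisfies $X \# (\pm\CP^2) \cong \pm\CP^2$. A short induction on $t$ — equivalently, absorbing the Gluck-twist summands one at a time into the $\mp\CP^2$ factor — collapses the entire iterated connect sum to $\mp\CP^2$, which is the claim.

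There is no substantive obstacle here; the argument is a routine chaining of the two previously established inputs. The only thing worth pausing over is bookkeeping with the sign conventions, verifying that $P_\pm$ with normal Euler number $\pm 2$ indeed has branched double cover $\mp\CP^2$ so that the final sign matches the statement.
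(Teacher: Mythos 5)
Your proposal is correct and follows essentially the same route as the paper: decompose the branched double cover of the connect-sum as $\Sigma^{\#t}\#(\mp\CP^2)$, invoke Corollary \ref{cor:miyazawa-gluck-twist} to see that each $\Sigma$ is a Gluck twist on $S^4$ and is therefore absorbed by $\mp\CP^2$, and induct on $t$. The sign bookkeeping ($\Sigma_2(S^4,P_\pm)\cong\mp\CP^2$) matches the paper's convention, so nothing further is needed.
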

\begin{proof}
    Let $\Sigma = \Sigma_2(S^4,\rho P(-2,3,7))$. Then, \begin{equation*}
        \Sigma_2(S^4, (\rho P(-2,3,7))^{\# t}\# P_\pm) \cong \Sigma^{\#t}\#\big(\mp\CP^2\big).
    \end{equation*}
    When $t=1$, Corollary \ref{cor:miyazawa-gluck-twist} tells us that $\Sigma$ can be obtained from $S^4$ by a Gluck twist, and so $\Sigma \# (\mp\CP^2)\cong \mp\CP^2$. The claim for larger $t$ follows by induction.
\end{proof}

\section{Group-Theoretic Considerations}\label{section:group-theory}

\subsection{Weight 1 Groups}\label{section:weight-one}The \emph{weight} of a group $\pi$ is the minimal number of elements needed to normally generate $\pi$. Of particular interest are groups of weight 1. The knot group of any connected codimension 2 submanifold embedded in $S^n$ has weight 1, such a group being normally generated by a meridian. An element which normally generates $\pi$ is called a \emph{weight} element. 

We are interested in groups with trivial abelianization. In this case, an element is a weight element if and only if centralizing that element trivializes the group.
\begin{lemma}\label{lem:weight-vs-central}
    Let $\pi$ be a perfect group. Then, $\ft\in \pi$ is a weight element if and only if $\pi = [\pi, \ft]$.
\end{lemma}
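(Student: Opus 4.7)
The natural strategy is to translate both conditions into statements about a common quotient of $\pi$. Let $H \subseteq \pi$ denote the subgroup generated by the commutators $\{[g,\ft] : g\in\pi\}$, and let $N$ denote the normal closure of $\ft$ in $\pi$. The conditions ``$\ft$ is a weight element'' and ``$\pi = [\pi,\ft]$'' then read $N = \pi$ and $H = \pi$, respectively.

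One direction is essentially free: each generator of $H$ factors as $[g,\ft] = (g\ft g^{-1})\ft^{-1}$, which lies in $N$. Hence $H \subseteq N$, and $H = \pi$ immediately forces $N = \pi$.

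For the harder direction, I would first verify that $H$ is itself normal in $\pi$. This is not automatic, since $\{\ft\}$ is not a normal subset of $\pi$, but it reduces to the single commutator identity
$$h[g,\ft]h^{-1} = [hg,\ft]\cdot [h,\ft]^{-1},$$
which I will check by a short direct expansion. Granted normality of $H$, the quotient $\pi/H$ is well-defined, and by construction the image of $\ft$ is central there. Now suppose $\ft$ normally generates $\pi$; then its image normally generates $\pi/H$, and because the normal closure of a central element coincides with the cyclic subgroup it generates, $\pi/H$ is cyclic, hence abelian. On the other hand, $\pi$ being perfect forces $\pi/H$ to be perfect. A perfect abelian group is trivial, so $\pi/H$ is trivial and $H = \pi$.

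The main obstacle is the normality step; everything else is a formal ``perfect plus cyclic equals trivial'' observation. Once the commutator identity above is verified, the remainder of the argument assembles itself without further calculation.
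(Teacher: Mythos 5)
Your proof is correct and takes essentially the same route as the paper's: the identity $[g,\ft]=(g\ft g^{-1})\ft^{-1}$ gives the direction $\pi=[\pi,\ft]\Rightarrow\ft$ normally generates, while the other direction passes to the quotient by $[\pi,\ft]$, where the image of $\ft$ is central, so the quotient is abelian and hence trivial by perfectness. The only difference is cosmetic: you spell out the normality of $[\pi,\ft]$ via the commutator identity, a point the paper leaves implicit.
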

\begin{proof}
    See \cite{476505}. If $\ft$ is a weight element, then $\pi / [\pi, \ft]$ is abelian. Therefore, $[\pi, \pi] \subseteq [\pi, \ft] \subseteq [\pi, \pi]$, so $[\pi,\ft] = [\pi,\pi] = \pi$.

    Conversely, if $\pi = [\pi, \ft]$ then $\pi$ is generated by commutators of the form $[\fs,\fr\ft\fr^{-1}]$ for $\fr,\fs\in \pi$. The subgroup generated by conjugates of $\ft$ contains the subgroup generated by commutators with conjugates of $\ft$, since \begin{equation*}
        \fs\fr\ft\fr^{-1}\fs^{-1}\fr\ft^{-1}\fr^{-1} = ((\fs\fr)\ft(\fr^{-1}\fs^{-1}))(\fr\ft^{-1}\fr^{-1}).
    \end{equation*}
\end{proof}
\begin{remark}
    If we forget the assumption that $\pi$ is perfect, the above argument shows that $[\pi,\ft]=[\pi,\pi]$ if and only if the normal subgroup generated by $\ft$ contains the commutator subgroup, i.e., $\pi/[\pi,\ft]$ is abelian if and only if $\pi/\langle\langle \ft\rangle\rangle$ is abelian.
\end{remark}

\subsection{The Fundamental Group of the Branched Double Cover}\label{section:fundamental-group} The main input to the construction of exotic $\RP^2$-knots in \cite{miyazawa} is a knot $K\subseteq S^3$ such that $\Sigma_2(S^4, \tau^m\rho^n K)$ is a homotopy sphere for some $m,n\in\Z$ satisfying $m+2n\equiv 2\pmod 4$. As discussed in Subsection \ref{section:gluck-twist}, when $m$ is even, $\Sigma_2(\tau^m\rho^n K)$ is the result of surgery on $\Sigma_2(S^3, K)\times S^1$ along a section whose projection to $\Sigma_2(S^3, K)$ is homotopic to $n$ times the fixed point set. Let $\gamma$ denote this section and $\ft\in \pi_1(\Sigma_2(S^3, K))$ the corresponding homotopy class, well-defined as an element of $\pi_1$ up to conjugacy. \begin{lemma}\label{lem:branched-cover-fundamental-group}
When $m$ is even, $\pi_1\big( \Sigma_2(S^4, \tau^m\rho^n K\big) )\cong \left\langle\,\pi_1( \Sigma_2(S^3, K)) \,\big|\, \ft\text{ central} \,\right\rangle$.
\end{lemma}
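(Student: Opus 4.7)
The plan is to apply Van Kampen's theorem to the decomposition of $\Sigma_2(S^4,\tau^m\rho^n K)$ already worked out in Subsection \ref{section:gluck-twist}, exploiting the identification of the mapping torus piece as a product with a section removed.

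First, I would invoke the description appearing inside the proof of Theorem \ref{thm:gluck}: when $m$ is even, the lifted monodromy $\tilde\tau^m\tilde\rho^n$ is isotopic to the identity via the explicit isotopy $\phi_t$, giving an identification
\begin{equation*}
    \Sigma_2(S^4,\tau^m\rho^n K) \cong \big((\Sigma_2(S^3,K)\times S^1)\setminus \nu(\gamma)\big)\cup_\partial S^2\times D^2,
\end{equation*}
where $\gamma$ is the section $t\mapsto (\phi_t(\tilde x),t)$. By construction the projection of $\gamma$ to $\Sigma_2(S^3,K)$ represents $\ft$, so under the product decomposition $\pi_1(\Sigma_2(S^3,K)\times S^1)\cong \pi_1(\Sigma_2(S^3,K))\times\Z$ we have $[\gamma] = \ft\,s$, where $s$ denotes the generator of the $\Z$-factor coming from the $S^1$ coordinate.

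Next, since $\gamma$ is a $1$-dimensional submanifold of a $4$-manifold, removing $\nu(\gamma)$ does not change $\pi_1$, and $\pi_1(\partial\nu(\gamma))=\pi_1(S^1\times S^2)=\Z$ is generated by the longitudinal copy of $\gamma$. Because $S^2\times D^2$ is simply connected, Van Kampen reduces the computation of $\pi_1(\Sigma_2(S^4,\tau^m\rho^n K))$ to killing the normal closure of the image of this $\Z$:
\begin{equation*}
\pi_1\!\big(\Sigma_2(S^4,\tau^m\rho^n K)\big) \;\cong\; \big(\pi_1(\Sigma_2(S^3,K))\times \Z\big)\big/\,\langle\langle\,\ft\,s\,\rangle\rangle.
\end{equation*}

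Finally, I would simplify this quotient by using the relation $s=\ft^{-1}$ to eliminate the $\Z$-generator. The direct product structure forces $s$ to commute with every element of $\pi_1(\Sigma_2(S^3,K))$ prior to quotienting; after the substitution this becomes precisely the statement that $\ft$ is central, recovering the desired presentation. There is no framing obstruction to worry about: although in principle the surgery gluing depends on the framing of $\gamma$, the framing affects only an element of $\pi_1(\partial\nu(\gamma))$, which is cyclic, so the framing is invisible to $\pi_1$. The only step that requires genuine care is the identification of the homotopy class $[\gamma]$ in $\pi_1(\Sigma_2(S^3,K))\times \Z$, and this is immediate from the explicit form of $\phi_t$ written down in the proof of Theorem \ref{thm:gluck}.
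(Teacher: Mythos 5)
Your proposal is correct and follows essentially the same route as the paper: both use the identification of the mapping torus piece with $\big(\Sigma_2(S^3,K)\times S^1\big)\setminus\nu(\gamma)$ from the proof of Theorem \ref{thm:gluck}, note that its fundamental group is $\pi_1(\Sigma_2(S^3,K))\times\Z$, and observe that gluing in $S^2\times D^2$ kills the class of the section $\gamma$, identifying $\ft$ with (the inverse of) the circle generator and yielding the presentation with $\ft$ central. Your write-up simply spells out the Van Kampen and framing details that the paper leaves implicit.
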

\begin{proof}
    The fundamental group of the mapping torus \begin{equation*}
        M_{\tilde\tau^m\tilde\rho^n} \cong \left(\Sigma_2\big(S^3, K\big) \times S^1\right) \setminus \nu(\gamma)
    \end{equation*}
    is $\pi_1(\Sigma_2(S^3, K)) \times \Z$, and gluing in $S^2\times D^2$ has the effect of identifying $\ft\in \pi_1(\Sigma_2(S^3, K))$ with the generator of $\Z$.
\end{proof}

In light of Corollary \ref{cor:homotopy-sphere} and Lemmas \ref{lem:weight-vs-central} and \ref{lem:branched-cover-fundamental-group}, we have the following. \begin{corollary}\label{cor:homotopy-sphere-weight}
    Let $m\in \Z$ be even. The branched double cover $\Sigma_2(S^4, \tau^m\rho^n K)$ is a homotopy 4-sphere if and only if $\ft\in\pi_1(\Sigma_2(S^4, \tau^m\rho^n K))$ is a weight element. In particular, $\Sigma_2(S^4, \tau^m\rho K)$ is a homotopy sphere if and only if the fixed knot in $\Sigma_2(S^3, K)$ normally generates the fundamental group.
\end{corollary}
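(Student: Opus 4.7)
The plan is to chain together Corollary \ref{cor:homotopy-sphere}, Lemma \ref{lem:branched-cover-fundamental-group}, and Lemma \ref{lem:weight-vs-central}. First I would invoke Corollary \ref{cor:homotopy-sphere} to reduce the problem to deciding when $\pi_1(\Sigma_2(S^4, \tau^m\rho^n K))$ is trivial. Then Lemma \ref{lem:branched-cover-fundamental-group} identifies this fundamental group with the centralizing quotient $\pi_1(\Sigma_2(S^3, K))/[\pi_1(\Sigma_2(S^3, K)), \ft]$, reducing the task to showing that this quotient is trivial if and only if $\ft$ is a weight element of $\pi_1(\Sigma_2(S^3, K))$.

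For the forward direction, if the centralizing quotient is already trivial, then so is any further quotient; in particular, the further quotient $\pi_1(\Sigma_2(S^3, K))/\langle\langle \ft\rangle\rangle$ (obtained by additionally setting $\ft = 1$) is trivial, exhibiting $\ft$ as a weight element.

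For the reverse direction, I would first extract perfectness of $\pi_1(\Sigma_2(S^3, K))$: the homology lemma preceding Corollary \ref{cor:homology-sphere} gives $H_1(\Sigma_2(S^4, \tau^m\rho^n K)) \cong H_1(\Sigma_2(S^3, K))$ for $m$ even, so being a homotopy sphere forces $\det(K) = 1$ and hence $H_1(\Sigma_2(S^3, K)) = 0$. Granted perfectness, Lemma \ref{lem:weight-vs-central} converts the weight-element hypothesis into the equality $[\pi_1(\Sigma_2(S^3, K)), \ft] = \pi_1(\Sigma_2(S^3, K))$, trivializing the centralizing quotient.

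The ``in particular'' clause is then the specialization $n = 1$, for which $\ft$ represents the conjugacy class of the fixed knot $\tilde K \subset \Sigma_2(S^3, K)$. No serious obstacle arises; the argument is a routine assembly of the earlier results. The one point requiring care is that Lemma \ref{lem:weight-vs-central} needs $\pi_1(\Sigma_2(S^3, K))$ to be perfect, but this is automatic from the homology constraint necessary for a homotopy sphere.
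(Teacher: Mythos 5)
Your assembly of Corollary \ref{cor:homotopy-sphere}, Lemma \ref{lem:branched-cover-fundamental-group}, and Lemma \ref{lem:weight-vs-central} is exactly the route the paper intends (its proof is nothing more than the citation of those three results), and your forward direction is fine: if the centralizing quotient $\pi/[\pi,\ft]$ is trivial, so is the further quotient $\pi/\langle\langle\ft\rangle\rangle$, since $[\pi,\ft]\subseteq\langle\langle\ft\rangle\rangle$. The gap is in the reverse direction, the one place where Lemma \ref{lem:weight-vs-central} (and hence perfectness of $\pi=\pi_1(\Sigma_2(S^3,K))$) is actually needed: you extract perfectness from ``being a homotopy sphere forces $\det(K)=1$,'' but in this direction the homotopy-sphere condition is the conclusion, not a hypothesis, so the argument as written is circular. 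Nor does the weight-element hypothesis force perfectness by pure group theory: a generator $\ft$ of $\Z/3\Z$ normally generates, yet $\pi/[\pi,\ft]=\Z/3\Z\neq 1$, so when $\det(K)\neq 1$ some input about the specific class $\ft$ is required.

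The missing ingredient is geometric: $\ft$ is freely homotopic to $n$ times the fixed knot $\tilde K$, and $[\tilde K]=0$ in $H_1(\Sigma_2(S^3,K))$, because the covering involution acts as $-\mathrm{id}$ on $H_1$, a group of odd order $\det(K)$, while fixing $\tilde K$ pointwise. Granting this, if $\ft$ normally generates $\pi$ then every abelian quotient of $\pi$ is generated by the image of $\ft$, which is trivial in homology, so $\pi^{\mathrm{ab}}=0$ and Lemma \ref{lem:weight-vs-central} applies; equivalently, by the remark following that lemma, $\pi/[\pi,\ft]$ is abelian, hence equals $\pi^{\mathrm{ab}}\cong H_1(\Sigma_2(S^3,K))$, and is generated by the vanishing class of $\ft$. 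With that observation your chain closes (and for the paper's applications $\Sigma_2(S^3,K)$ is an integer homology sphere, so perfectness is automatic there); without it, the reverse implication is unproved exactly in the case $\det(K)\neq 1$ that the general statement covers.
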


In the torus knot case, Corollary \ref{cor:homotopy-sphere-weight} implies that the fixed set always normally generates.
\begin{proposition}\label{prop:torus-knot-normally-generates}
    Let $p,q\in\Z$ be odd and relatively prime. View the Brieskorn sphere $\Sigma(2,p,q)$ as the branched double cover of the torus knot $T(p,q)$. Then, the homotopy class of the fixed knot normally generates $\pi_1(\Sigma(2,p,q))$.
\end{proposition}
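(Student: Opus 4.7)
The plan is to use the Seifert fibered presentation of $\pi_1(\Sigma(2,p,q))$ together with the fact that $\Sigma(2,p,q)$ is an integer homology sphere (so its fundamental group is perfect). This reduces the problem to showing that the quotient by the normal closure of the fixed knot is abelian, which can be read off directly from the presentation.

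Concretely, $\Sigma(2,p,q)$ is Seifert fibered over $S^2$ with three exceptional fibers of multiplicities $2, p, q$, and under the identification with the branched double cover of $T(p,q)$, the fixed set of the branching involution is precisely the singular fiber of multiplicity $2$. The fundamental group therefore admits a standard presentation
\[
\pi_1\bigl(\Sigma(2,p,q)\bigr) = \left\langle\, q_1, q_2, q_3, h \,\middle|\, [h,q_i],\ q_1^2 = h^{b_1},\ q_2^p = h^{b_2},\ q_3^q = h^{b_3},\ q_1 q_2 q_3 = h^{e_0} \,\right\rangle,
\]
for appropriate Seifert invariants $b_1, b_2, b_3, e_0 \in \Z$, in which the fixed knot is represented (up to conjugacy) by $q_1$.

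Now I would set $q_1 = 1$ in this presentation: the relation $q_1 q_2 q_3 = h^{e_0}$ becomes $q_2 q_3 = h^{e_0}$, which allows $q_3$ to be eliminated in favor of $q_2$ and $h$. What remains is a group generated by $q_2$ and the central element $h$, hence abelian. Because $p,q$ are odd and coprime, $\Sigma(2,p,q)$ is an integer homology sphere, so $\pi_1$ is perfect and any abelian quotient is trivial. Thus the normal closure of $q_1$ exhausts all of $\pi_1(\Sigma(2,p,q))$.

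The only step requiring care, rather than any real difficulty, is the identification of the fixed knot with the Seifert generator $q_1$; this is immediate from the description of $\Sigma(2,p,q)$ as the link of the singularity $\{z^2 + x^p + y^q = 0\}$, on which the branching involution is $z \mapsto -z$ and whose fixed locus is precisely the singular fiber of multiplicity $2$.
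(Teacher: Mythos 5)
Your proof is correct, but it takes a genuinely different route from the paper's. The paper argues four-dimensionally: by Corollary \ref{cor:homotopy-sphere-weight} it suffices to show that $\Sigma_2(S^4,\rho T(p,q))$ is simply connected, and this follows by combining Litherland's identity $\rho T(p,q)\simeq \tau^{-pq}T(p,q)$ with the Hughes--Kim--Miller invariance of the branched double cover under adding four twists and Zeeman's theorem that $\pm1$-twist-spins are unknotted; that chain of results yields the stronger conclusion that $\Sigma_2(S^4,\rho T(p,q))$ is actually diffeomorphic to $S^4$. Your argument instead stays in dimension three and runs the Seifert presentation directly---kill the fixed set, observe the quotient is abelian, invoke perfectness---which is precisely the style of argument the paper uses for the Montesinos case (Proposition \ref{prop:montesinos-23q-normally-generates} and Theorem \ref{thm:spheres}); it is more elementary and self-contained, at the cost of not recovering the diffeomorphism statement. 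One small imprecision: the fixed knot is the \emph{core} $f$ of the multiplicity-two exceptional solid torus, and in the fundamental group of that solid torus one has (up to orientation conventions) $h=f^{2}$ and $q_1=f^{b_1}$, so the fixed knot represents $h^{(1-b_1)/2}q_1$ rather than $q_1$ itself---i.e.\ it agrees with $q_1$ only up to a power of the central element $h$, the same ambiguity the paper records in Lemma \ref{lem:words-representing-fixed-knots}. Consequently, killing the fixed knot sets $q_1$ equal to a power of $h$ rather than equal to $1$; this does not affect your argument, since the quotient is then generated by $q_2$ together with central elements, hence still abelian, and perfectness finishes the proof as before.
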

\begin{proof}
    By Corollary \ref{cor:homotopy-sphere-weight}, it suffices to prove that $\Sigma_2(S^4, \rho T(p,q))$ is simly-connected. Litherland \cite{litherland} proves $\rho T(p,q)\simeq \tau^{-pq} T(p,q)$. Hughes, Kim, and Miller \cite{hughes-kim-miller} prove that the branched double cover of $\tau^{-pq}T(p,q)$ is the same as that of $\tau^{-pq+4k}T(p,q)$ for any $k\in \Z$. Since $pq$ is odd, we may take $k$ such that $-pq + 4k = \pm 1$. Zeeman \cite{zeeman} proves that the $\pm1$-twist-spin of any knot is unknotted. Therefore, the branched double cover of $\rho T(p,q)$ is diffeomorphic to $S^4$.
\end{proof}

When $m$ is odd, a presentation for the fundamental group of the branched double cover can be derived from \cite{plotnick}.
\begin{lemma}
    When $m$ is odd, $\pi_1(\Sigma_2(\tau^m\rho^n K))\cong \langle\,\pi K\,|\,\mu = \lambda^{\pm n}, \mu^2\text{ central}\,\rangle$.
\end{lemma}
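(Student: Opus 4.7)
The plan is to derive the presentation from Plotnick's computation of the knot group of $\tau^m\rho^n K$ and then identify the fundamental group of the branched double cover inside the orbifold group, repackaged as a quotient of $\pi K$ via a twist by the abelianization.

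Plotnick's analysis yields
\[
\pi(\tau^m\rho^n K) \cong \pi K \big/ \langle\langle\, [\mu^m\lambda^n,\, \pi K]\,\rangle\rangle,
\]
with $\mu$ representing the meridian of the 2-knot. Thus $\pi_1(\Sigma_2(S^4, \tau^m\rho^n K))$ is the kernel of the orbifold group $\overline G := \pi(\tau^m\rho^n K)/\langle\langle\mu^2\rangle\rangle$ mapping to $\Z/2$ via $\mu \mapsto 1$. Since $m$ is odd, $\mu^m \equiv \mu \pmod{\mu^2}$, so
\[
\overline G \cong \pi K \big/ \langle\langle\, \mu^2,\, [\mu\lambda^n, \pi K]\,\rangle\rangle,
\]
in which $c := \mu\lambda^n$ is central.

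To present the index-$2$ subgroup as a quotient of $\pi K$, I would introduce the twisted homomorphism
\[
f \colon \pi K \longrightarrow \overline G, \qquad g \longmapsto c^{-\varepsilon(g)}\bar g,
\]
where $\varepsilon \colon \pi K \to \Z$ is the abelianization. Centrality of $c$ makes $f$ a homomorphism whose image lies in $\pi_1(\Sigma_2(S^4,\tau^m\rho^n K))$. Surjectivity follows because any lift of an element of the index-$2$ subgroup can, after multiplication by a power of $\mu$ (trivial modulo $\mu^2$), be taken to have $\varepsilon = 0$, and on such elements $f$ is the identity. A direct check shows $\mu\lambda^n \in \ker f$ (since $f(\mu\lambda^n) = c^{-1}c = 1$) and $[\mu^2, \pi K] \subseteq \ker f$ (since $f(\mu^2) = c^{-2}$ is central in $\overline G$).

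The remaining step, and the main hurdle, is to show $\ker f \subseteq M := \langle\langle \mu\lambda^n,\, [\mu^2, \pi K]\rangle\rangle$. If $g \in \ker f$, then $w := g\cdot c^{-\varepsilon(g)}$ lies in $\ker(\pi K\to\overline G) = \langle\langle \mu^2,\, [c, \pi K]\rangle\rangle$, with $\varepsilon(w) = 0$. The commutators $[c, \pi K]$ and their conjugates already lie in $M$ since $c\in M$ and $M$ is normal. For the $\mu^{\pm 2}$-part, note that modulo the relation $[\mu^2, \pi K]$ every conjugate $h\mu^2 h^{-1}$ becomes $\mu^2$; hence a balanced word $\prod h_i\mu^{2\epsilon_i}h_i^{-1}$ with $\sum\epsilon_i = 0$ collapses to $\mu^{2\sum\epsilon_i} = 1$ modulo $[\mu^2, \pi K]$, placing it in $M$. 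Combined with $c^{\varepsilon(g)}\in M$, this gives $g \in M$, establishing the isomorphism $\pi_1(\Sigma_2(S^4,\tau^m\rho^n K)) \cong \langle \pi K \mid \mu = \lambda^{-n},\, \mu^2 \text{ central}\rangle$. The $\pm$ sign in the statement records the rolling-orientation convention, since reversing the $S^1$ factor in the mapping torus decomposition exchanges $n$ and $-n$.
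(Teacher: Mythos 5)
Your argument is correct, but it takes a genuinely different route from the paper. The paper identifies $\Sigma_2(S^4,\tau^m\rho^n K)$ with one of Plotnick's manifolds $\Sigma_A = (S^3\setminus\nu(K))\times S^1\cup_A(S^4\setminus\nu(T))$ for an explicit gluing matrix $A$ (obtained from Plotnick's Sections 2 and 7) and then reads the presentation off that geometric decomposition by van Kampen. You instead start from the classical presentation of the $2$-knot group of $\tau^m\rho^n K$ (this is really Litherland's theorem, though it can also be extracted from Plotnick -- you should cite it precisely, since it is the one nontrivial input you do not prove), pass to the $\Z/2$-orbifold group $\overline G = \pi(\tau^m\rho^n K)/\langle\langle\mu^2\rangle\rangle$, and compute the index-$2$ kernel via the ``untwisting'' homomorphism $g\mapsto c^{-\varepsilon(g)}\bar g$. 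I checked the key step: surjectivity, the containment $M\subseteq\ker f$, and the converse inclusion all go through, the crucial point being that any expression of an element of $\ker(\pi K\to\overline G)$ as a product of conjugates of $\mu^{\pm2}$ and of $[c,\cdot]^{\pm1}$ must have balanced $\mu^{\pm2}$-exponents because $\varepsilon$ vanishes on it, so it dies in $\pi K/M$ where $\mu^2$ is central. What each approach buys: yours is more elementary and purely group-theoretic, and it makes transparent that $\pi_1$ of the branched cover is insensitive to $m$ beyond its parity (since $\mu^m\lambda^n=\mu\lambda^n$ once $\mu^2=1$) -- the $m\bmod 4$ dependence lives in the framing/Gluck-twist data, which $\pi_1$ cannot see; the paper's route, by contrast, also records the $4$-manifold itself through the matrix $A$, which is why its sign $\mp n$ depends on $m\bmod 4$. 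In that light, your closing remark that the $\pm$ merely reflects the rolling-orientation convention is consistent with your computation (which produces a single definite sign for all odd $m$), and slightly sharper than, though not in conflict with, the lemma as stated.
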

\begin{proof}
    Plotnick \cite{plotnick} considers 4-manifolds of the form \begin{equation*}
        \Sigma_A = \big(S^3\setminus\nu(K)\big)\times S^1\cup_A \big(S^4\setminus \nu(T)\big)
    \end{equation*}
    where $K\subseteq S^3$ is a knot and $T\subseteq S^4$ is an unknotted torus, $(S^3\setminus\nu(K))\times S^1$ and $S^4\setminus \nu(T)$ have their boundaries canonically identified with $T^3$, and $A\in\SL(3,\Z)$ is a matrix specifying a gluing of the boundaries. Section 7 of \cite{plotnick} gives a matrix presenting cyclic branched covers of $\tau^m\rho^n K$ with the order of branching relatively prime to $m$, and Section 2 of \cite{plotnick} gives moves on the matrix $A$ which leave $\Sigma_A$ unchanged. Combining these, we find that when $m$ is odd, the branched double cover of $\tau^m\rho^n K$ is $\Sigma_{A}$, where \begin{equation*}
        A = \begin{bmatrix}
            1 & 0 & 0 \\
            0 & 1 & 0 \\
            \mp n & 2n & 1
        \end{bmatrix}.
    \end{equation*}
    The sign on $\mp n$ depends on $m\pmod 4$. The fundamental group is then as claimed.
\end{proof}

\subsection{Connect-Sums with Projective Planes} Let $S\subseteq S^4$ be a 2-knot and let $P\subseteq S^4$ be an unknotted $\RP^2$. The knot groups $\pi S$ and $\pi (S\# P)$ are related by \begin{equation*}
    \pi (S\# P) = \left\langle\,\pi S\,\big|\,\mu^2 = 1\,\right\rangle 
\end{equation*}
where $\mu\in\pi S$ is a meridian. In constructing exotic $\RP^2$-unknots, Miyazawa \cite{miyazawa} observes that if $S\# P$ has knot group $\Z/ 2\Z$ then $\Sigma_2(S^4, S)$ is a homotopy $S^4$. Miyazawa argues by decomposing the branched double cover of $S\# P$, which is simply-connected, as a connect-sum of the branched double cover of $S$ with $\pm\CP^2$. In fact, the implication goes both ways. \begin{proposition}\label{prop:planes-vs-spheres}
    Let $S\subseteq S^4$ be a 2-knot and let $P\subseteq S^4$ be an unknotted $\RP^2$. The branched double cover $\Sigma_2(S^4, S)$ is a homotopy sphere if and only if $S\# P$ is topologically unknotted.
\end{proposition}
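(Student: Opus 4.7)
The plan is to split the equivalence into two implications, passing through the auxiliary statement that $\pi(S\#P)\cong \Z/2$. The forward direction (topologically unknotted $\Rightarrow$ homotopy sphere) is essentially Miyazawa's original observation; the converse, which is the novel content, requires a fundamental group computation plus a topological classification input.

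For the forward direction, I would use the connect-sum decomposition of branched double covers,
\[ \Sigma_2(S^4, S\#P)\cong \Sigma_2(S^4, S)\#\bigl(\pm\CP^2\bigr). \]
Topological unknottedness of $S\#P$ makes the left-hand side homeomorphic to $\pm\CP^2$, and van Kampen then forces $\pi_1(\Sigma_2(S^4, S))=1$. Combined with $\chi(\Sigma_2(S^4, S))=2$ (a standard Euler characteristic count for a branched double cover of $S^4$ over a 2-sphere), $\Sigma_2(S^4, S)$ is a simply-connected closed 4-manifold with $b_2=0$, hence a homotopy 4-sphere.

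For the converse, the key is to establish the short exact sequence
\[ 1 \longrightarrow \pi_1\bigl(\Sigma_2(S^4,S)\bigr) \longrightarrow \pi(S\#P) \longrightarrow \Z/2 \longrightarrow 1. \]
The surjection is the abelianization of $\pi(S\#P)$, which lands in $\Z/2$ because $\pi S$ abelianizes to $\Z$ generated by a meridian $\mu$ and the relation $\mu^2=1$ coming from the connect-sum with $P$ cuts this down to $\Z/2$. To identify the kernel, I would check that both $\pi_1(\Sigma_2(S^4,S))$ and $\ker(\pi(S\#P)\to \Z/2)$ are presented as $\ker(\pi S\to \Z/2)/\langle\langle \mu^2\rangle\rangle$: the former by standard covering space theory (the unbranched double cover of the complement has fundamental group $\ker(\pi S\to \Z/2)$, and gluing in a neighborhood of the branching surface kills $\mu^2$), and the latter by direct inspection of the presentation $\pi(S\#P)=\langle \pi S\mid \mu^2\rangle$, after noting that $\langle\langle\mu^2\rangle\rangle_{\pi S}$ already sits inside $\ker(\pi S\to \Z/2)$. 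Given this exact sequence, simply-connectedness of $\Sigma_2(S^4,S)$ forces $\pi(S\#P)\cong \Z/2$.

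The final and main obstacle is concluding topological unknottedness of $S\#P$ from $\pi(S\#P)\cong \Z/2$. This is an instance of the topological unknotting problem for surfaces in $S^4$; I would invoke a classification theorem for $\RP^2$-knots with cyclic knot group of order $2$, which should be accessible via Freedman--Quinn surgery theory (since $\Z/2$ is a ``good'' group) or by quoting an existing result from the literature, in analogy with the Conway--Powell theorem for 2-knots with knot group $\Z$.
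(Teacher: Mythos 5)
Your proposal is correct and takes essentially the same route as the paper: the paper likewise establishes the short exact sequence $1\to\pi_1(\Sigma_2(S^4,S))\to\pi(S\#P)\to\Z/2\Z\to1$ (phrased via the splitting $\pi S\cong(\pi S)'\rtimes\Z$ rather than via $\ker(\pi S\to\Z/2\Z)$ and normal closures), and the external input you hoped to quote exists and is exactly what the paper cites — Conway--Orson--Powell prove that $S\#P$ is topologically unknotted if and only if $\pi(S\#P)\cong\Z/2\Z$. The only cosmetic difference is that you handle the forward direction directly through $\Sigma_2(S^4,S\#P)\cong\Sigma_2(S^4,S)\#(\pm\CP^2)$ (Miyazawa's observation), whereas the paper runs both directions through the same group-theoretic equivalence.
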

\begin{proof}
    Conway, Orson, and Powell \cite{conway-orson-powell} prove that $S\# P$ is topologically unknotted if and only if it has knot group $\pi(S\# P)\cong \Z/2\Z$. Our aim is therefore to prove that $\pi_1(\Sigma_2(S^4, S)) = 1$ if and only if $\pi\big(S\# P\big)\cong \Z/2\Z$.

    In fact, we claim that $\pi_1(\Sigma_2(S^4, S))$ is isomorphic to the commutator subgroup of $\pi(S\# P)$. The knot group $\pi S$ splits (non-canonically) as a semidirect product \begin{equation*}
        \pi S \cong \big(\pi S\big)' \rtimes \Z
    \end{equation*}
    where $(\pi S)'$ denotes the commutator subgroup and $\Z$ is generated by a meridian. The knot group $\pi(S\# P)$ is obtained from $\pi S$ by killing the square of a meridian, so we have an identification \begin{equation*}
        \pi(S\# P) \cong \frac{\big(\pi S\big)' \rtimes \Z}{2\Z}.
    \end{equation*}
    The fundamental group of the double cover of $S^4\setminus \nu S$ is $(\pi S)'\rtimes 2\Z$, the unique index 2 subgroup of $\pi S$. The branched double cover is obtained from the double cover by gluing in a sphere, killing all elements of the fundamental group represented by loops which are homotopic into the boundary. Such elements are exactly those that are conjugate to some element of the $2\Z$ subgroup. Therefore, we have an identification \begin{equation*}
        \pi_1\big(\Sigma_2\big( S^4, S \big)\big) \cong \frac{\big(\pi S\big)' \rtimes 2\Z}{2\Z}.
    \end{equation*}
    There is then a short exact sequence of groups \begin{equation*}
        1 \longrightarrow \pi_1\big(\Sigma_2\big( S^4, S \big)\big) \longrightarrow \pi(S\# P) \longrightarrow \Z/2\Z \longrightarrow 1.
    \end{equation*}
\end{proof}

\section{Construction of Homotopy Spheres}\label{section:brieskorn-spheres}

\subsection{Triangle Groups} Let $p,q,r\in\Z$ mutually relatively prime, such that $1/p + 1/q + 1/r < 1$. The triangle group $\Delta(p,q,r)\subseteq{\PSL(2,\R)}$ consists of the orientation preserving isometries of $\mathbb H^2$ which preserve a tiling $\cT$ by triangles with angles $\pi / p$, $\pi / q$, and $\pi / r$. Acting on the right, it has the presentation \begin{equation*}
    \Delta(p,q,r) = \presentation{a,b,c}{a^p=b^q=c^r=abc=1}
\end{equation*} 
where $a,b,c$ are counterclockwise rotations about the vertices of $\cT$. The preimage of $\Delta(p,q,r)$ in \scalebox{0.8}{${\SLtilde}$}$(2,\R)$ is the centrally extended triangle group $\Gamma(p,q,r)$. The fundamental group of the Brieskorn sphere $\Sigma(p,q,r)$ is isomoprhic to the commutator subgroup $\pi = \Gamma'\subseteq \Gamma$.

Either by considering a Dehn surgery description for $\Sigma(p,q,r)$ or by viewing $\Sigma(p,q,r)$ as an orbifold bundle over $S^2(p,q,r)$, it is straightforward to derive a presentation for $\pi$, \begin{equation*}
    \pi = \presentation{\fa,\fb,\fc,\fz}{\fz\text{ central},\fa^p=\fz^{-b_1}, \fb=\fz^{-b_2},\fc=\fz^{-b_3},\fa\fb\fc=\fz^{-e}}
\end{equation*}
where $b_1,b_2,b_3,e\in\Z$ satisfy \begin{equation*}
    b_1 qr + b_2 pr + b_3 pq = 1 + epqr.
\end{equation*}
The generator $\fz$ is represented by a regular fiber of $\Sigma$. Under the projection-induced map $\pi_1(\Sigma)\to\pi_1^{\operatorname{orb}}(S^2(p,q,r))\cong \Delta(p,q,r)$, the generators $\fa,\fb,\fc$ map to $a,b,c$, and the kernel of the projection-induced map is the center, generated by $\fz$.

\subsection{Branching Involutions}\label{section:branching-involutions}
Let $q,r\in\Z$ be odd and relatively prime, such that $1/p + 1/q + 1/r < 1$. The Brieskorn sphere $\Sigma(2,q,r)$ is simultaneously the double cover of $S^3$ branched over the torus knot $T(q,r)$ and over the Montesinos knot $K(2,q,r)$. Both branching involutions lift to fiber-preserving involutions of the universal cover \scalebox{0.8}{${\SLtilde}$}$(2,\R)$ and have a natural interpretation in terms of the action on $\cT$. The involution branching over $T(q,r)$ induces a rotation about an order 2 vertex of $\cT$, while the involution branching over $K(2,q,r)$ reverses the orientation of the fibers and induces a reflection about a line which is a union of edges of $\cT$.

\begin{lemma}\label{lem:words-representing-fixed-knots}
    Let $q,r\in\Z$ be odd and relatively prime such that $1/2+1/q+1/r<1$. Consider an element $\fm\in\pi_1(\Sigma(2,q,r))$ such that the image of $\fm$ in $\Delta(2,q,r)$ is \begin{equation}\label{eq:y-form-1}
        c^{\frac{r+1}2}b^{\frac{q+1}2}ab^{-\frac{q+1}2}c^{-\frac{r+1}2}a    
    \end{equation} 
    Up to multiplication by a power of $\fz$, $\fm$ represents the homotopy class of the fixed knot branching over $K(2,q,r)$.
\end{lemma}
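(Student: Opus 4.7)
My plan is to identify the fixed knot of the Montesinos branching involution on $\Sigma(2,q,r)$ via the Seifert fibered structure, lift to the universal cover $\widetilde{\SL}(2,\R)\to\mathbb H^2$, and match its homotopy class with the given word.

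The Seifert fibration $\Sigma(2,q,r)\to S^2(2,q,r)$ is preserved by the branching involution $\iota_M$ over $K(2,q,r)$. Since the three cone points of $S^2(2,q,r)$ have pairwise distinct orders, the induced orbifold involution on the base must fix each of them individually, so its fixed set is a geodesic circle $C$ passing through all three cone points. Lifting to $\widetilde{\SL}(2,\R)$, the involution becomes a reflection of $\mathbb H^2$ about a geodesic $\ell$ (which passes through vertices of all three types in $\cT$) composed with the orientation-reversal of the $\R$-fiber. Since this fiber involution is orientation-reversing, each generic fiber over $C$ carries two fixed points upstairs, so the fixed set in $\Sigma$---being homeomorphic to $K(2,q,r)$ and hence connected---projects to $C$ as a connected degree-two cover. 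In $\pi_1^{\mathrm{orb}}(S^2(2,q,r))=\Delta$ its free homotopy class is therefore the conjugacy class of $\gamma^2$, where $\gamma$ is the primitive hyperbolic translation stabilizing $\ell$. Since the projection $\pi_1(\Sigma)\to\Delta$ has kernel $\langle\fz\rangle$, this determines $\fm$ up to a power of $\fz$.

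To match this with the stated word I would use the identity that $AaA^{-1}\cdot a$ equals the composition of the rotations by $\pi$ about $A(v_2)$ and $v_2$, which is a hyperbolic translation along $\overline{v_2\,A(v_2)}$ of length $2d(v_2,A(v_2))$. It then suffices to verify that the specific choice $A = c^{(r+1)/2}b^{(q+1)/2}$ sends $v_2$ to a $v_2$-vertex of $\cT$ lying on a geodesic in the $\Delta$-orbit of $\ell$ at distance equal to one period of $\gamma$. The relevant combinatorial input is that for $q$ odd the edge direction at $v_q$ antipodal to the edge $e_{qr}$ connects to a $v_2$-vertex (and similarly at $v_r$ for $r$ odd), so that $b^{(q+1)/2}$ (rotation by $\pi+\pi/q$ about $v_q$) and $c^{(r+1)/2}$ (rotation by $\pi+\pi/r$ about $v_r$) combine in $A$ to realize a fundamental-period shift along the axis geodesic.

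The main obstacle is this final geometric verification: identifying the precise vertex $A(v_2)$ of the tiling, confirming it lies on a reflection line of $\cT$ in the $\Delta$-orbit of $\ell$, and checking that its distance from $v_2$ equals one translation period of $\gamma$. I expect it to reduce to identifying a cycle of six edges of $\cT$ (two of each type) forming one period of $\ell$, and matching the successive images of $v_2$ under $b^{(q+1)/2}$ and $c^{(r+1)/2}$ to the vertices encountered in traversing this cycle.
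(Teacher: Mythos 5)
Your structural setup (the involution reverses fibers, reflects the base, and the fixed knot lies over the equator $C$ through the three cone points) matches the paper, and your mechanism for identifying the word --- writing it as $AaA^{-1}\cdot a$, a product of $\pi$-rotations about $v_2$ and $A(v_2)$, hence a translation of length $2d(v_2,A(v_2))$ along the line joining them --- is essentially the paper's argument. But there is a genuine error in the group-theoretic identification: the image of the fixed knot in $\Delta$ is the \emph{primitive} hyperbolic element $\gamma$ stabilizing $\ell$, not $\gamma^2$. The point is that the equator traversed once does not correspond to any element of $\Delta$: since $q$ and $r$ are odd, the two edges of $\ell$ meeting at a vertex of order $q$ (or $r$) are of different types, while at an order-$2$ vertex they are of the same type, so the vertex pattern along $\ell$ reads $\dots,v_2,v_q,v_r,v_2,v_r,v_q,\dots$ and has period $2(A+B+C)$, twice the length of $C$. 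A translation by $A+B+C$ would send $v_q$-vertices to $v_r$-vertices and so does not preserve $\cT$; the primitive translation in $\Delta$ along $\ell$ already corresponds to the double traverse of the equator. Your (correct) observation that the fixed knot is generically $2$-to-$1$ over $C$ therefore pins down its image as $\gamma^{\pm1}$, of translation length $2(A+B+C)$, not $\gamma^2$ of length $4(A+B+C)$. This is not a cosmetic slip: your closing plan asks that $A(v_2)$ lie at distance ``one period of $\gamma$'' from $v_2$, which is false; carried out honestly, the verification gives $d(v_2,A(v_2))=A+B+C$, i.e.\ $A(v_2)$ is the \emph{adjacent} order-$2$ vertex on the axis (the one of opposite alignment), so the word is a candidate for $\gamma$ itself --- and getting the right power matters for the normal-generation argument that the lemma feeds into.

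The second, lesser issue is that the final geometric verification --- locating $A(v_2)$, checking it lies on the axis, and computing its distance from $v_2$ --- is exactly the content of the lemma and is left as an acknowledged obstacle rather than carried out. The paper does this by elementary rotation bookkeeping: the composite $g=b^{(q-1)/2}c^{(r-1)/2}$ of a rotation through $\pi-\pi/r$ about $v_r$ and a conjugated rotation through $\pi-\pi/q$ about $v_q$ carries one order-$2$ vertex on the axis to the adjacent one, and then $g^{-1}aga$, which equals the word \eqref{eq:y-form-1} after reducing exponents mod $q$ and $r$, is the translation carrying one order-$2$ vertex of a fixed alignment to the next one of the same alignment, of length $2(A+B+C)$. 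With the target corrected from $\gamma^2$ to $\gamma$, your rotation-product strategy reduces to the same computation; as written, however, the proposal both aims at the wrong element and omits the step that would have exposed the discrepancy.
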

\begin{figure}
    \includegraphics[width=\textwidth]{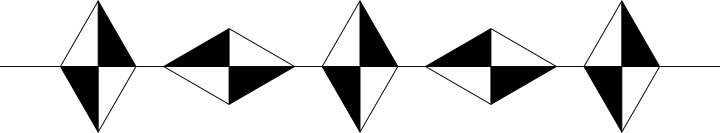}
    \caption{Order 2 vertices of $\cT$ arranged along the horizontal axis}\label{fig:alternating}
\end{figure}
\begin{figure}
    \includegraphics[width=0.5\textwidth]{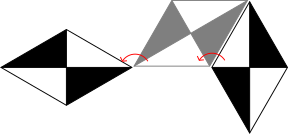}
    \caption{A sequence of rotations in $\Delta(2,q,r)$ taking one order 2 vertex to the next along the horizontal axis}\label{fig:rotations}
\end{figure}
\begin{proof}
    The branching involution on $\Sigma(2,q,r)$ induces a reflection of the plane about a line made of edges of $\cT$. An element of $\Delta(2,q,r)$ in the image of the homotopy class of the fixed knot is a hyperbolic transformation whose axis is such a line of reflection (cf. Remark 12.39 in \cite{burde-zieschang}). The fixed knot lies above an equator of $S^2(2,q,r)$ which connects the three orbifold points, and passes through each of the singular fibers of $\Sigma(2,q,r)$ twice. The translation length of a hyperbolic transformation represented by the fixed knot is $2(A+B+C)$, where $A$, $B$, and $C$ are the side lengths of a hyperbolic triangle with angles $\pi/2$, $\pi/q$, and $\pi/r$.

    The generators $a,b,c\in\Delta(2,q,r)$ represent rotations about the vertices of a triangle of orders $2$, $q$, and $r$, respectively. We can arrange that the order 2 vertex corresponding to the generator $a$ is centered at the origin in $\mathbb H^2$, and the vertices corresponding to $b$ and $c$ lie on the vertical and horizontal axes, respectively. Order 2 vertices of $\cT$ appear at regular intervals along the horizontal axis with alternating vertical and horizontal alignments, as shown in Figure \ref{fig:alternating}. Two consecutive horizontally aligned vertices are related by a hyperbolic transformation along the horizontal axis which preserves $\cT$. This transformation is in the conjugacy class represented by the homotopy class of the fixed knot.

    Figure \ref{fig:rotations} shows a sequence of rotations going from right to left taking a vertically aligned order 2 vertex to a horizontally aligned order 2 vertex. The first rotation is a rotation through angle $\pi - \pi / r$ about the vertex corresponding to $c$, hence this rotation is represented by $c^{(r-1)/2}$. The second rotation is conjugate to $b^{(q-1)/2}$. Specifically, with $\Delta(2,q,r)$ acting on the right, it is represented by $c^{-(r-1)/2}b^{(q-1)/2}c^{(r-1)/2}$. The composite is \begin{equation*}
        g = c^{\frac{r-1}2}c^{-\frac{r-1}2}b^{\frac{q-1}2}c^{\frac{r-1}2} = b^{\frac{q-1}2}c^{\frac{r-1}2}.
    \end{equation*}
    If we instead apply the transformation $aga=a^{-1}ga$, the vertically aligned vertex at the center is taken to the horizontally aligned vertex to its right. A hyperbolic transformation going from left to right and taking one horizontally aligned vertex to the next is then given by \begin{equation*}
        g^{-1}aga = c^{\frac{r+1}2}b^{\frac{q+1}2}ab^{-\frac{q+1}2}c^{-\frac{r+1}2}a.
    \end{equation*}
\end{proof}

\begin{proposition}\label{prop:montesinos-23q-normally-generates}
    Let $s\in\Z$ and view $\Sigma(2,3,|6s+1|)$ as the double cover of $S^3$ branched over a Montesinos knot. The conjugacy class of hyperbolic elements of $\Delta(2,3,|6s+1|)$ in the image of the homotopy class of the fixed knot generates $\Delta(2,3,|6s+1|)$. 
\end{proposition}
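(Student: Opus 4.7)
The plan is to show that the quotient $\Delta(2,3,r)/\langle\langle\fm\rangle\rangle$ is trivial, where $r = |6s+1|$, $n = (r+1)/2$, and $\fm$ is the word from Lemma \ref{lem:words-representing-fixed-knots} with $q = 3$, namely $\fm = c^n b^2 a b^{-2} c^{-n} a$. The first move would be to reveal the hidden commutator structure of $\fm$ by changing generators to $(a, x)$ with $x = ab$. Then $c = x^{-1}$, $b = ax$, $b^{-1} = x^{-1}a$, $b^2 = b^{-1}$, $b^{-2} = b$, and direct substitution combined with $a^2 = 1$ telescopes the expression to
\[
\fm = x^{-(n+1)} a x^{n+1} a,
\]
which is the conjugate by $x^{n+1}$ of the commutator $[a, x^{n+1}]$. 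Consequently the normal closure of $\fm$ coincides with the normal closure of $[a, x^{n+1}]$.

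Next I would work in $G = \Delta(2,3,r)/\langle\langle\fm\rangle\rangle = \langle a, x \mid a^2, (ax)^3, x^r, [a, x^{n+1}]\rangle$, and exploit the fact that the centralizer of $a$ inside the cyclic subgroup $\langle x \rangle \subseteq G$ is itself a subgroup of $\langle x\rangle$. This centralizer contains $x^{n+1}$ by hypothesis and $x^r = 1$ trivially, so by Bezout it contains $x^{\gcd(n+1,\,r)}$. A one-line Euclidean step gives $r - 2(n+1) = -3$, so $\gcd(r, n+1) = \gcd(n+1, 3)$: for $r = 6s+1$ this is $\gcd(3s+2, 3) = 1$, and for $r = 6k-1$ (when $s = -k < 0$) this is $\gcd(3k+1, 3) = 1$. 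In either case $a$ and $x$ commute in $G$.

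Once $[a, x] = 1$ the remaining relations collapse immediately: $(ax)^3 = a^3 x^3 = a x^3$, so $a = x^{-3}$; squaring and using $a^2 = 1$ gives $x^6 = 1$, which together with $x^r = 1$ and $\gcd(r, 6) = 1$ forces $x = 1$, hence $a = 1$, and $G$ is trivial. The step I expect to be the main obstacle is the very first one, namely spotting the change of generators that rewrites the rather opaque word $\fm$ as a single commutator; after that, the arithmetic hypothesis $r \equiv \pm 1 \pmod 6$ (encoded in the form $r = |6s+1|$) feeds directly into the coprimality $\gcd(n+1, r) = 1$ and the argument proceeds mechanically.
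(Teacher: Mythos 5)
Your argument is correct and is essentially the paper's proof in different clothing: the substitution $x=ab=c^{-1}$ turns the $q=3$ word of Lemma \ref{lem:words-representing-fixed-knots} into the statement that $a$ commutes with $x^{(r+3)/2}$ (the paper writes this as $a=c^{(r+3)/2}ac^{-(r+3)/2}$), the coprimality step reduces in both cases to $\gcd(3,r)=1$, and where the paper finishes by noting the quotient is abelian and $\Delta$ is perfect, you finish by explicitly collapsing the abelian quotient to the trivial group --- a cosmetic difference. The only omission is the pair of degenerate cases $s\in\{0,-1\}$, where $1/2+1/3+1/|6s+1|>1$, so $\Delta(2,3,|6s+1|)$ is not a hyperbolic triangle group and Lemma \ref{lem:words-representing-fixed-knots} does not supply the word; the paper treats these separately via Proposition \ref{prop:torus-knot-normally-generates} (the knots being the unknot and $T(3,5)$), although your purely algebraic computation is insensitive to this and works for any $r$ prime to $6$.
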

\begin{proof}
    In the cases $s=0$ and $s=-1$, the correponding Montesinos knot is a determinant 1 torus knot (specifically, the unknot and the torus knot $T(3,5)$, respectively), so we may appeal to Proposition \ref{prop:torus-knot-normally-generates}. Otherwise, we are in the hyperbolic case. Let $y\in\Delta=\Delta(2,3,r=|6s+1|)$ denote the element expressed in \eqref{eq:y-form-1}. We will prove that $\Delta / \langle \langle y\rangle\rangle = 1$. When $q=3$, we have $b^{(q+1)/2} = b^2 = b^{-1}$ and therefore \begin{equation*}
            y = c^{\frac{r+1}2}b^{-1}abc^{-\frac{r+1}2}a.
    \end{equation*}
    Using $b=(ca)^{-1}=ac^{-1}$, we then have \begin{equation*}
        y = c^{\frac{r+3}2}ac^{-\frac{r+3}2}a
    \end{equation*}
    and so in the quotient $y=1$, \begin{equation*}
        a = c^{\frac{r+3}2}ac^{-\frac{r+3}2}
    \end{equation*}
    so $a$ commutes with $c^{(r+3)/2}$ and hence also with $c^{r+3} = c^3$. Since $(3,r)=1$, $a$ must also commute with $c$. Any two of the generators $a,b,c$ together generate the whole group $\Delta$, and so $\Delta/\langle\langle y \rangle\rangle$ is abelian. Since $\Delta$ has trivial abelianization, $y$ must normally generate the whole of $\Delta$.
\end{proof}

We are now ready to prove Theorems \ref{thm:spheres}, \ref{thm:planes}, and \ref{thm:involutions}. \begin{proof}[Proof of Theorem \ref{thm:spheres}]
    The double cover of $S^3$ branched over $K=K(2,3,|6s+1|)$ is $\Sigma=\Sigma(2,3,|6s+1|)$, whose fundamental group is a central extension of $\Delta=\Delta(2,3,|6s+1|)$. Let $\fm\in\pi=\pi_1(\Sigma)$ be an element representing the homotopy class of the fixed knot branching over $K$. By Proposition \ref{prop:montesinos-23q-normally-generates}, the image of $\fm$ normally generates $\Delta$, so any element of $\pi$ may be written as $\fz^k\fg$ for some $k\in\Z$, where $\fz\in\pi$ is the generator of the center and $\fg$ belongs to the normal subgroup generated by $\fm$. Therefore, $\pi / \langle\langle \fm\rangle\rangle$ is cyclically generated by the image of $\fz$. Since $\pi$ has trivial abelianization, it follows that $\pi$ is normally generated by $\fm$. The theorem then follows from Corollary \ref{cor:homotopy-sphere-weight}.
\end{proof}
\begin{proof}[Proof of Theorem \ref{thm:planes} and Theorem \ref{thm:involutions}]
   Let $P$ denote a smoothly unknotted $\RP^2$-knot, and consider the connect-sum $P_s = (\rho K_s)\# P$ where $K_s$ is the Montesinos knot $K(2,3,|6s+1|)$ ($K_0$ is the unknot and $K_{-1}$ is the torus knot $T(3,5)$; otherwise $K_s$ is hyperbolic). By Theorem \ref{thm:spheres} and Proposition \ref{prop:planes-vs-spheres}, $P_s$ is topologically unknotted for all $s\in \Z$. Let $|\deg|$ be Miyazawa's real Seiberg-Witten invariant for 2-knots and $\RP^2$-knots, defined in \cite{miyazawa}. The real Seiberg-Witten invariants for the branched double covers of $\rho K_s$ and $P_s$ coincide. Further, in \cite{miyazawa}, it is shown that for all knots $K\subseteq S^3$, the value $|\deg(\rho K)| = |\deg((\rho K_s)\# P)|$ coincides with a corresponding 3-dimensional knot invariant $|\deg(K)|$. In \cite{kang-park-taniguchi}, the value of this 3-dimensional invariant evaluated on the Montesinos knot $K_s$ is shown to be \begin{equation*}
        |\deg(K_s)| =
            \begin{cases}
                \begin{aligned}
                    4j - 1&,\quad |6s+1| = 12j - 1\text{ or }12j - 5 \\
                    4j + 1&,\quad |6s+1| = 12j + 1\text{ or }12j + 5 
                \end{aligned}
            \end{cases}
   \end{equation*}
   and so in particular all positive odd values are achieved. The branched double cover of $(\rho K_s)\# P$ is a homotopy $\pm \CP^2$ and the branching involution is topologically conjugate to complex conjugation on standard $\pm\CP^2$. In \cite{miyazawa}, it is shown that the real Seiberg-Witten invariant is trivial for any involution which preserves a positive scalar curvature metric.
\end{proof}
\begin{remark}
    By taking connect sums of the roll-spun knots $\rho K(2,3,|6s+1|)$, we obtain many more 2-knots whose branched double covers are homotopy spheres and many more topologically unknotted $\RP^2$-knots. In particular, from any single knot $K(2,3,|6s+1|)$ we can construct an infinite family.
\end{remark}
\subsection{Torus Surgeries} In this section, we explain how twist-roll-spun knots whose branched covers are $S^4$ are implicitly related to nontrivial torus surgeries from $S^4$ to itself.

By a \emph{torus surgery}, we mean the operation of cutting out $T^2\times D^2$ from a 4-manifold and regluing it by a diffeomorphism of the boundary. We will call a torus surgery \emph{nontrivial} if the regluing diffeomorphism is not isotopic to the identity. Nontrivial torus surgeries may leave the 4-manifold unchanged. The branched double covers $\Sigma_2(S^4,\rho K(2,3,|6s+1|)$ can all be obtained from $S^4$ by a nontrivial torus surgery. More generally, this is true for $\Sigma_2(S^4,\tau^m\rho^n K)$ whenever $m$ is even and $\Sigma_2(S^3, K)$ is the result of Dehn surgery on a knot $K'\subseteq S^3$. 
\begin{proposition}\label{prop:torus-surgery}
    Suppose that $\Sigma_2(S^3, K)$ is the result of Dehn surgery on a knot $K'$. Then, the branched double cover $\Sigma_2(S^4, \tau^m\rho^n K)$ is the result of a nontrivial torus surgery on $S^4$.
\end{proposition}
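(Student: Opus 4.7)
The plan is to realize $\Sigma_2(S^4, \tau^m \rho^n K)$ as the output of a torus surgery on $S^4$ by promoting the Dehn surgery on $K'$ yielding $\Sigma_2(S^3, K)$ to a fiberwise operation inside the mapping torus appearing in the decomposition \eqref{eq:branched-cover-decomp}.

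I would first restrict to $m$ even, where the proof of Theorem \ref{thm:gluck} identifies
\[
    \Sigma_2(S^4, \tau^m\rho^n K) \cong \big( \Sigma_2(S^3, K) \times S^1 \setminus \nu(\gamma) \big) \cup_\partial S^2 \times D^2,
\]
with $\gamma$ a section projecting to a loop in the homotopy class of $n[\tilde K]$. Writing $\Sigma_2(S^3, K) = S^3_{p/q}(K')$ for some slope $p/q \neq \infty$, the product yields a torus $\tilde K' \times S^1 \subseteq \Sigma_2(S^3,K) \times S^1$ along which the inverse Dehn surgery, performed fiberwise, recovers $S^3 \times S^1$. Since $\dim\gamma + \dim(\tilde K'\times S^1) = 1 + 2 < 4$, a generic-position isotopy makes $\gamma$ disjoint from this torus, and the torus surgery descends to one on the full branched double cover. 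The output
\[
    \big( S^3 \times S^1 \setminus \nu(\gamma') \big) \cup_\partial S^2 \times D^2
\]
can then be identified with $S^4$ by observing that $\tau^m\rho^n U$ (for $U$ the unknot) is itself unknotted, so the displayed manifold is precisely the branched-double-cover decomposition for the unknot case. For $m$ odd, the analogous argument would run on the mapping torus $\Sigma_2(D^3, K^\circ) \tilde\times_{\iota_K} S^1$, provided $K'$ can be chosen $\iota_K$-invariantly; Hatcher's contractibility of $\mathrm{Diff}(D^3, \partial D^3)$ then identifies the resulting unknot mapping torus with $D^3 \times S^1$, giving $S^4$ after the $S^2 \times D^2$ gluing.

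For nontriviality, the regluing diffeomorphism of $T^3 = \partial \nu(\tilde K' \times S^1)$ takes the form $\phi \times \mathrm{id}_{S^1}$, where $\phi \in \mathrm{MCG}(T^2) = \GL(2,\Z)$ is the Dehn surgery gluing on $\partial \nu(K')$. The assumption $p/q \neq \infty$ forces $\phi$ to act nontrivially on $H_1(T^2)$, hence $\phi \times \mathrm{id}$ acts nontrivially on $H_1(T^3)$ and is not isotopic to the identity in $\mathrm{Diff}(T^3)$. The main obstacle I anticipate is verifying that the $S^2 \times D^2$ gluing transports faithfully through the torus surgery so as to produce the standard decomposition of $S^4$—a potential Gluck-twist ambiguity that should be ruled out by tracking the explicit section $\gamma(t) = (\phi_t(\tilde x), t)$ from the proof of Theorem \ref{thm:gluck}, along with the $m$-odd equivariance subtlety for $K'$.
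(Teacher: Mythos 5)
Your proposal is correct and rests on the same core idea as the paper---promoting the Dehn surgery on $K'$ to a fiberwise torus surgery along $K'\times S^1$ inside the mapping-torus decomposition---but you run it in the opposite direction, and this changes what the argument produces. The paper starts from $S^4$: it realizes $S^4$ as $M_g\cup_\partial S^2\times D^2$, where $M_g\cong D^3\times S^1$ is the mapping torus of the point-pushing diffeomorphism of $D^3\supseteq K'$ along the class $g$ of the projection of $\gamma$, and the torus surgery along the trace $K'\tilde\times S^1$ then converts this directly into the decomposition \eqref{eq:branched-cover-decomp}; this has the added benefit of exhibiting the explicit knotted torus $T_j(K',g)\subseteq S^4$ on which the surgery is performed, which the paper needs for the subsequent proposition about $T_1(T(2,3),g)$. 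You instead surger the branched cover down to $\big(S^3\times S^1\setminus\nu(\gamma')\big)\cup_\partial S^2\times D^2$ after pushing $\gamma$ off the dual torus by general position; this is fine (to get the stated direction you should add the one-line remark that torus surgeries are invertible, the inverse regluing being nontrivial exactly when the original is), but it identifies $S^4$ only abstractly and loses track of the torus. Two smaller points: the ``Gluck-twist ambiguity'' you flag as the main obstacle is actually a non-issue, since $\gamma'$ is homotopic, hence isotopic, to a standard section of $S^3\times S^1$, and gluing $S^2\times D^2$ to $D^3\times S^1$ by either the identity or the Gluck twist yields $S^4$ (the twist extends over $D^3\times S^1$ by rotating the $D^3$ factor), so no tracking of the explicit section is required; and your restriction to even $m$, with an equivariance caveat for odd $m$, matches the paper, whose own proof treats only even $m$ and relegates the odd case to a remark requiring stronger hypotheses.
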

\begin{proof}
    The branched double cover $\Sigma_2(S^4, \tau^m\rho^n K)$ can be obtained by gluing $S^2\times D^2$ to the mapping torus $(\Sigma_2(S^3, K)\times S^1) \setminus \nu(\gamma)$, where $\gamma$ is a closed embedded curve and $\nu(\gamma)$ is given the canonical framing of $\gamma$ if $m\equiv 0\pmod 4$ or the Gluck-twisted framing if $m\equiv 2\pmod 4$.  By assumption, $\Sigma_2(S^3, K)$ is the result of Dehn surgery on a knot $K'$. Let $g\in\pi K' = \pi_1(S^3\setminus \nu(K'))$ be represented by a curve homotopic to the projection of $\gamma$ to $\Sigma_2(S^3, K)$. 
    
    Let $x\in S^3\setminus\nu(K)$ and view $K'$ as contained in $D^3 = S^3\setminus \nu(x)\subseteq S^3$. Taking $x$ to be the basepoint, every element of $\pi K$ defines a point-pushing diffeomorphism of $D^3\setminus \nu(K)$ fixing the boundary, which extends to a diffeomorphism of $D^3$ fixing $\nu(K)$, well-defined up to isotopy fixing $\nu(K)$. Form the mapping torus $M_g = D^3\tilde\times S^1 \cong_{\text{rel. }\partial} D^3\times S^1$ whose monodromy is point-pushing along $g$. The knot $K'$ traces out a torus $K'\tilde\times S^1$ in $M_g$. By gluing $M_g$ to $S^2\times D^2$ along the boundary, we obtain two potentially different knotted tori in $S^4$, depending on whether we glue by the identity or by a Gluck twist. Let $T_0(K',g), T_1(K',g)\subseteq S^4$ denote the tori resulting from gluing by to the identity or by a Gluck twist, respectively.
    
    A nontrivial torus surgery along $K'\tilde \times S^1$ turns $M_g$ into $(\Sigma_2(S^3,K)\times S^1))\setminus \nu(\gamma)$. The branched double cover $\Sigma_2(S^4,\tau^m\rho^n K)$ is obtained by gluing in $S^2\times D^2$, with the framing depending on $m\pmod 4$. Commuting the operations of torus surgery and gluing in $S^2\times D^2$ proves the claim.
\end{proof}
\begin{remark}
    We may also allow for $m$ odd if we make the stronger assumption that $\Sigma_2(S^3, K)$ is Dehn surgery on a knot $K'$, such that the fixed knot comes from an unknot $U$ in $S^3$ disjoint from $K'$, and $K'$ is preserved by an involution of $S^3$ fixing $U$.
\end{remark}
\begin{remark}
    Banks \cite{banks} shows that the mapping class group of a punctured 3-manifold (in our case, a knot complement with an extra point removed) is an extension of the mapping class group of the original 3-manifold by the image of the point-pushing map, and the kernel of the point-pushing map is contained in the center of the fundamental group. Budney \cite{budney} computes the homotopy type of $\mathrm{Emb}(S^1,\R^3)$. The fundamental group of a given component can be shown to be an extension of the mapping class group of the punctured knot complement by $\Z/2\Z$, corresponding to turning.
\end{remark}
\begin{remark}
    Boyle \cite{boyle} defines spun and turned tori from knots in $S^3$. Analogously to the knotted tori defined in the proof of Proposition \ref{prop:torus-surgery}, we may define $T_{j}(K',g)$ for any $g\in \pi K'$. In our notation, $T_0(K',1)$ is the spun torus and $T_1(K',1)$ is the turned torus, where $1$ is the identity element.
\end{remark}

\begin{corollary}
    For all $s\in\Z$, the branched double cover $\Sigma_2(S^4,\tau^m\rho^n K(2,3,|6s+1|)$ can be obtained from $S^4$ by a nontrivial torus surgery.
\end{corollary}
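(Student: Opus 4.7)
The plan is to apply Proposition \ref{prop:torus-surgery} (together with the first remark following it, to cover odd $m$) with $K = K(2,3,|6s+1|)$ and $K' = T(2,3)$ the right-handed trefoil. The hypothesis to verify is that $\Sigma_2(S^3, K(2,3,|6s+1|))$ is the result of Dehn surgery on a knot in $S^3$; by construction of the Montesinos knot, this branched double cover is the Brieskorn homology sphere $\Sigma(2,3,|6s+1|)$.

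The first step is to recall the standard surgery description of the Brieskorn sphere: $\Sigma(2,3,|6s+1|)$ is obtained from $S^3$ by rational Dehn surgery on $T(2,3)$, with slope of the form $\pm 1/s$ depending on sign conventions. This is a classical Kirby-calculus exercise: starting from the three-component Seifert-fibered surgery diagram of $\Sigma(2,3,|6s+1|)$, a single slam-dunk absorbing one of the multiplicity-$2$ or multiplicity-$3$ exceptional fibers collapses the diagram to the trefoil with the appropriate rational slope. With this $K'$ in hand, Proposition \ref{prop:torus-surgery} applies verbatim for even $m$ and produces the desired nontrivial torus surgery on $S^4$.

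For odd $m$, we invoke the strengthened hypothesis from the first remark after Proposition \ref{prop:torus-surgery}: we need $T(2,3)$ to be preserved by an involution of $S^3$ whose fixed set contains an unknot $U$, disjoint from $T(2,3)$, which produces the fixed knot of the Montesinos branching involution after surgery. The trefoil is strongly invertible, and its axis of strong inversion is an unknot $U\subseteq S^3$ disjoint from $T(2,3)$. The reflection picture in Subsection \ref{section:branching-involutions} then identifies the image of $U$ in $\Sigma(2,3,|6s+1|)$ with the fixed knot of the Montesinos branching involution, as both descend from the line-of-reflection interpretation of that involution on the universal cover.

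The main obstacle is the last identification in the odd-$m$ case: matching the image of the strong-inversion axis of the trefoil, carried through the slam-dunk into $\Sigma(2,3,|6s+1|)$, with the hyperbolic conjugacy class described by the element $\fm$ of Lemma \ref{lem:words-representing-fixed-knots}. This is a geometric bookkeeping check in Seifert-fibered coordinates rather than a conceptual difficulty, and once handled, the corollary follows immediately from Proposition \ref{prop:torus-surgery} and its remark.
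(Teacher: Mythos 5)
Your even-$m$ argument is exactly the paper's proof: observe that $\Sigma_2(S^3,K(2,3,|6s+1|))=\Sigma(2,3,|6s+1|)$ is Dehn surgery on a knot (the paper cites both the trefoil and a twist-knot description) and apply Proposition \ref{prop:torus-surgery}. That part is fine.

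The odd-$m$ portion, however, contains a genuine error rather than a deferrable ``bookkeeping check.'' The axis of a strong inversion of a knot meets the knot in exactly two points, so it is \emph{not} disjoint from $T(2,3)$; the symmetry of the trefoil with an unknotted axis $U$ disjoint from the knot is its cyclic period~$2$, not a strong inversion. And that period-$2$ symmetry does not do what you need: it preserves the orientation of the Seifert fibers, and after equivariant $\pm1/s$-surgery on the trefoil its fixed set is the multiplicity-$2$ exceptional fiber, so the induced involution exhibits $\Sigma(2,3,|6s+1|)$ as the double cover of $S^3$ branched over the torus knot $T(3,|6s+1|)$, not over the Montesinos knot $K(2,3,|6s+1|)$. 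The branching involution relevant here reverses the fiber orientation, and (as in the proof of Lemma \ref{lem:words-representing-fixed-knots}) its fixed knot passes twice through \emph{every} exceptional fiber, including the multiplicity-$|6s+1|$ fiber, which in the trefoil surgery description is the core of the surgery solid torus. Hence the fixed knot is not the image of an unknot disjoint from $T(2,3)$; it corresponds instead to the strong inversion of the trefoil, whose axis meets the knot, so the hypotheses of the remark following Proposition \ref{prop:torus-surgery} are not verified by the route you propose. If you want to cover odd $m$ you need a different argument (or a different surgery description for which the remark's equivariance hypothesis genuinely holds); for the even-$m$ case, which is the one the paper actually uses, your proof stands and agrees with the paper's.
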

\begin{proof}
    The branched double cover $\Sigma_2(K(2,3,|6s+1|))=\Sigma(2,3,|6s+1|)$ is Dehn surgery on the trefoil knot and on a twist knot. Now, apply Proposition \ref{prop:torus-surgery}.
\end{proof}

Now, consider the pretzel knot $P(-2,3,7)$. Teragaito \cite{teragaito} observes that $\tau^{18}\rho P(-2,3,7)$ is unknotted. The branched double cover $\Sigma_2(S^3,P(-2,3,7)) = \Sigma(2,3,7)$ is Dehn surgery on a trefoil knot $T(2,3)$. By Proposition \ref{prop:torus-surgery}, a nontrivial torus surgery along $T_1(T(2,3), g)$ produces $S^4$ as the result, where $g\in\pi T(2,3)$ is represented by a curve homotopic to the fixed knot in $\Sigma(2,3,7)$. The torus knot group $\pi T(2,3)$ has the presentation \begin{equation*}
    \pi T(2,3) = \left\langle\,a,b\,\big|\,a^2=b^3\,\right\rangle
\end{equation*} 
and, following the discussion in Subsection \ref{section:branching-involutions}, we may take $g$ to be \begin{equation*}
    g = \big(b^{-1}a\big)^{5}a\big(b^{-1}a\big)^{-5}a^{-1}.
\end{equation*}

Montesinos \cite{montesinos-twin} proves that any multiplicity 1 torus surgery on $S^4$ returns $S^4$ as the result. Larson \cite{larson} proves that any turned torus also admits nontrivial torus surgeries to $S^4$. The torus $T_1(T(2,3), g)$ does not fall in to either of these classes. 
\begin{proposition}
    The torus $T_1(T(2,3),g)$ is a torus admitting a nontrivial torus surgery to $S^4$ which is not topologically unknotted and is not a turned torus.
\end{proposition}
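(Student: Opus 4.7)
The first claim follows directly from Proposition \ref{prop:torus-surgery} applied to $K=P(-2,3,7)$ with $m=18$ (so $m\equiv 2\pmod 4$ gives the Gluck-twisted framing and hence $T_1$) and $n=1$, together with Teragaito's observation that $\tau^{18}\rho P(-2,3,7)$ is unknotted, which gives $\Sigma_2(S^4,\tau^{18}\rho P(-2,3,7))\cong S^4$. So the torus surgery in question turns $S^4$ back into $S^4$, and is nontrivial because the regluing is the one that converts the trivial mapping-torus monodromy into point-pushing along the nontrivial element $g$.

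For the non-unknottedness, the plan is to compute the knot group. Since a Gluck twist preserves $\pi_1$, van Kampen applied to the mapping-torus decomposition of the complement yields
\begin{equation*}
    \Pi := \pi_1\bigl(S^4\setminus T_1(T(2,3),g)\bigr)\;\cong\;\pi T(2,3)\big/\langle\langle [g,a],\,[g,b]\rangle\rangle,
\end{equation*}
i.e., $\pi T(2,3)$ with $g$ made central. The unknotted torus in $S^4$ has knot group $\Z$, and since $\Pi^{\mathrm{ab}}=\Z$, it suffices to show $\Pi$ is non-abelian. My plan is to use the Alexander module: the Alexander module of the trefoil is $\Z[\zeta_6]=\Z[t^{\pm1}]/(t^2-t+1)$, with $a\mapsto t^3$ and $b\mapsto t^2$ under the abelianization. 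The central-making relations translate to multiplying the class $[g]\in\Z[\zeta_6]$ by $(1-t^3)$ and $(1-t^2)$; since $\gcd(1-t^3,1-t^2)=1-t$, which is a unit in $\Z[\zeta_6]$, the image submodule is the principal ideal $([g])$, and the Alexander module of $\Pi$ is $\Z[\zeta_6]/([g])$. A Fox-calculus computation with $g=(b^{-1}a)^5 a (b^{-1}a)^{-5}a^{-1}$ then determines $[g]$ and verifies that it is a nonzero non-unit, so this quotient is nontrivial. Hence $\gamma_2(\Pi)\neq 1$, so $\Pi$ is non-abelian and $T_1(T(2,3),g)$ is not topologically unknotted.

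The hardest step is showing $T_1(T(2,3),g)$ is not a turned torus: the knot group of any turned torus $T_1(K'',1)$ is the classical knot group $\pi K''$ of a knot $K''\subseteq S^3$, so one must show $\Pi$ is not isomorphic to any classical knot group. The plan is to exhibit a structural invariant of $\Pi$ that no such group possesses. The cleanest candidates are torsion (classical knot groups are torsion-free), failure of deficiency $1$, non-vanishing $H_2(\Pi)$, or a failure of the Alexander polynomial constraint $\Delta(1)=\pm 1$ (the Alexander module of $\Pi$ above, reinterpreted as a $\Z[t^{\pm 1}]$-module, has $\Z$-torsion and a non-principal presentation, which is incompatible with the classical Wirtinger-type presentation). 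This group-theoretic reduction, and in particular the precise computation of $[g]$ and extraction of the distinguishing invariant, is where the bulk of the work lies.
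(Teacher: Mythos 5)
Your first step (the identification of the surgery, via Proposition \ref{prop:torus-surgery} with $K=P(-2,3,7)$, $m=18$, $n=1$, and Teragaito's observation) and your presentation $\Pi\cong\langle\,\pi T(2,3)\,|\,g\text{ central}\,\rangle$ agree with the paper. The genuine gap is in the deferred computation on which both your non-unknottedness step and your non-turned-torus step rest: the class $[g]$ is in fact a \emph{unit} in the Alexander module $A\cong \Z[t^{\pm1}]/(t^2-t+1)\cong\Z[\zeta_6]$ of the trefoil, so $\Z[\zeta_6]/([g])=0$ and your strategy collapses. Concretely, write $g=[h,a]$ with $h=(b^{-1}a)^5$, and let $d\colon \pi T(2,3)\to A$ be the crossed homomorphism determined by the splitting along the degree-one element $c=b^{-1}a$ (so $d(c)=0$, $a\mapsto t^3$, $b\mapsto t^2$). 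Then $d(h)=(1+t+t^2+t^3+t^4)\,d(c)=0$, and the commutator formula gives $[g]=d([h,a])=(1-t^3)\,d(h)+(t^5-1)\,d(a)=(t^5-1)\,d(a)$. In $\Z[\zeta_6]$ we have $t^6=1$, so $t^5-1=t^{-1}(1-t)$ is a unit (the norm of $1-t$ is $1$; indeed $1-t=-t^2$ there). Moreover $d(a)$ generates $A$: it is the class of $ac^{-3}=ba^{-1}ba^{-1}b$, whose abelianized Fox-derivative vector is $-t^{-2}\bigl(t+1,\,-(t^2+t+1)\bigr)$, i.e.\ a unit multiple of the generator of $\ker\partial_1$ for the presentation $\langle a,b\,|\,a^2b^{-3}\rangle$. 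Hence $[g]$ is a unit, the Alexander module of $\Pi$ vanishes, and the metabelianization of $\Pi$ is just $\Z$. In particular the metabelian quotient cannot show $\Pi$ is non-abelian, and none of your fallback invariants that factor through the Alexander module can work either (the Alexander polynomial of $\Pi$ is $1$, exactly as for the unknot); the nontriviality of $\Pi$ lives in a perfect commutator subgroup, invisible to these quotients. Your listed alternatives (torsion, deficiency, $H_2$) are not carried out and are not obviously available.

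For comparison, the paper argues at the level of the full group. It rules out the unknotted torus and the turned/spun unknot by mapping $\Pi$ onto the further quotient $\langle\,a,b\,|\,a^2,\,b^3,\,(b^{-1}a)^{5}a(b^{-1}a)^{-5}a^{-1}\,\rangle$ (kill $a^2=b^3$ and $g$ itself) and certifying with Sage that this group is non-abelian; this is exactly the kind of non-metabelian quotient your approach would need. For turned tori on nontrivial knots, it does not exhibit an intrinsic invariant of $\Pi$ at all: it observes that $\Pi$ is a \emph{proper} quotient of $\pi T(2,3)$, invokes the Boileau--Boyer--Reid--Wang results around Simon's conjecture for 2-bridge knots (the trefoil group surjects onto no knot group other than itself and $\Z$), and uses Hopficity of torus knot groups (Malcev, finitely generated linear groups) to exclude an isomorphism $\Pi\cong\pi T(2,3)$. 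If you want to repair your argument, you should replace the Alexander-module step by such a surjection/Hopficity argument or by an explicit non-metabelian (e.g.\ finite or computer-verified) quotient of $\Pi$.
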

\begin{proof}
    We can distinguish $T_1(T(2,3),g)$ from all turned tori using the fundamental group. For any knot $K'$ and any $g'\in \pi K'$, the knot group $\pi T_j(K',g') = \pi_1(S^4 \setminus \nu(T_j(K',g')))$ has the presentation \begin{equation*}
        \pi T_j\big(K',g'\big) = \left\langle\,\pi K'\,\big|\,g'\text{ central}\,\right\rangle.
    \end{equation*}
    In particular, for the turned-or-spun torus $T_j(K',1)$, the knot group is $\pi K'$. On the other hand $\pi T_1(T(2,3), g)$ is a proper quotient of $\pi T(2,3)$. 
    
    From the discussion in Section 3.1 of \cite{boileau-et-al}, we can deduce that $\pi T(2,3)$ does not surject onto any knot groups other than itself and the group of the unknot. Further, no knot group is a proper quotient of itself (for torus knots, this can be deduced from the fact that torus knot groups are linear, using a theorem of Malcev \cite{malcev} that finitely generated linear groups are Hopfian). Therefore, $T_1(T(2,3),g)$ is not isotopic to $T_j(K',1)$ for any nontrivial knot $K'$. We can also rule out the case of the unknot, since $\pi T_j(T(2,3),g)$ surjects on to the group \begin{equation*}
        \left\langle\,a,b\,\big|\,a^2,b^3,\big(b^{-1}a\big)^{5}a\big(b^{-1}a\big)^{-5}a^{-1}\,\right\rangle
    \end{equation*}
    and Sage \cite{sagemath} can check that this group is not abelian.
\end{proof}

\section{Speculation and Open Questions}\label{section:open-stuff}

The fundamental question motivating this work is whether homotopy 4-spheres that arise as branched double covers of $S^4$ can be exotic, or whether they are necessarily standard. If any of the homotopy spheres of Theorem \ref{thm:spheres} were shown to be standard, this would imply the existence of an infinite family of inequivalent involutions on $S^4$ which do not preserve a positive scalar curvature metric. 
\begin{question}
    Are the homotopy spheres of Theorem \ref{thm:spheres} diffeomorphic to $S^4$?
\end{question}
Hughes, Kim, and Miller \cite{hughes-kim-miller} prove that the homotopy $\CP^2$s constructed by Miyazawa \cite{miyazawa} are standard. Their proof relies on the fact that an even surgery on the pretzel knot $P(-2,3,7)$ is a lens space. This proof does not immediately generalize to the other homotopy $\CP^2$s of Theorem \ref{thm:involutions}, the branched double covers of the $\RP^2$-knots $K(2,3,|6s+1|)\# P$, as $K(2,3,|6s+1|)$ does not generally admit a lens space surgery. Indeed, Ichihara and Jong \cite{ichara-jong} prove that $P(-2,3,7)$ is the \emph{only} hyperbolic Montesinos knot admitting a lens space surgery.
\begin{question}
    Are the homotopy $\CP^2$s of Theorem \ref{thm:involutions} diffeomorphic to $\CP^2$?
\end{question}
Similarly, our proof that $\Sigma_2(S^4, \rho P(-2,3,7))$ is a Gluck twist on $S^4$ uses the fact that $P(-2,3,7)$ has a lens space surgery, so we our method is insufficient to deduce the analogous result for other knots. 
\begin{question}
    Are the homotopy spheres of Theorem \ref{thm:spheres} Gluck twists on $S^4$?
\end{question}
Every possible value of the real Seiberg-Witten invariant $|\deg|$ is realized by $K(2,3,|6s+1|)$ for two different values of $s$, and so there are two topologically unknotted $\RP^2$-knots for which $|\deg|$ takes the same value. By taking connect-sums of roll-spun knots and using the multiplicative property of $|\deg|$, we can construct many more examples for which $|\deg|$ takes the same values. We are unable to determine whether the resulting $\RP^2$-knots are smoothly isotopic.
\begin{question}
    Are there 2-knots $S_1,S_2$ with $|\deg(S_1)|=|\deg(S_2)|$, such that $S_1\#P_\pm$ and $S_2\#P_\pm$ are both topologically unknotted but smoothly distinct?
\end{question}

Lastly, we remark that there are other examples of twist-roll spun knots which we can use to produce topologically unknotted $\RP^2$-knots, by analogous fundamental group computations. For example, Sage \cite{sagemath} can check that $(\rho K(2,5,7))\# P_\pm$ is topologically unknotted, but it is unclear whether this falls in to a larger pattern of Montesinos knots for which this construction works. There are many examples of 2-knots (e.g., \cite{kim}, \cite{longo}) such that connect-summing with $P_\pm$ produces a \emph{smoothly} unknotted $\RP^2$-knot.
\begin{question}
    For what input knots can we produce exotic $\RP^2$-knots by connect-sum with twist-roll-spun knots?
\end{question}

\end{document}